\documentclass[reqno,twoside]{amsart}
\usepackage{amsfonts}
\usepackage{amsmath,amssymb}
\usepackage{epic}
\usepackage{pstricks}
\usepackage{graphicx}
\usepackage{xcolor}
\usepackage{upgreek}
\usepackage[ruled,vlined,linesnumbered]{algorithm2e}
\usepackage{enumitem}
\usepackage{algorithmic}
\usepackage{booktabs}
\usepackage{subcaption}
\usepackage[colorlinks]{hyperref} 
\usepackage[
  backend=biber,
  style=numeric-comp,
  doi=true,
  url=false,
  giveninits=true
]{biblatex}
\addbibresource{references.bib}
\AtBeginBibliography{%
  \DeclareNameAlias{author}{family-given}
  \DeclareNameAlias{editor}{family-given}
}
\usepackage{tikz}
\usetikzlibrary{arrows.meta, positioning}


\usepackage{a4wide,amsmath,amssymb,latexsym,amsthm}
\usepackage{eucal}
\setlength{\textwidth}{16cm}
\setlength{\textheight}{20 cm}

\usepackage{graphicx}


\newtheorem{theorem}{Theorem}[]
\newtheorem{proposition}[]{Proposition}
\newtheorem{remark}[]{Remark}
\newtheorem{lemma}[]{Lemma}
 
\newtheorem{definition}[]{Definition}
\newtheorem{example}[]{Example}

\newcommand{\be}{\begin{equation}}
\newcommand{\ee}{\end{equation}}
\newcommand{\ba}{\begin{eqnarray}}
\newcommand{\ea}{\end{eqnarray}}
\newcommand{\beq}{\begin{equation}}
\newcommand{\eeq}{\end{equation}}

\usepackage{color}

\usepackage[textsize=small]{todonotes}

\numberwithin{equation}{section}

\usepackage{color}
\usepackage{stmaryrd}
\usepackage{comment}


\keywords{}
\subjclass[2010]{}

\begin{document}
\title[Neural Network Dual Norms for Minimal Residual Finite Element Methods]{Neural Network Dual Norms for Minimal Residual Finite Element Methods}

\author{Hamd Alsobhi}
\address{H. Alsobhi,  School of Mathematical Sciences, University of Nottingham, UK and
Department of Mathematics, Islamic University of Madinah, Saudi Arabia}
\email{Hamd.alsobhi@nottingham.ac.uk, Hamd.alsobhi@iu.edu.sa}

\author{Emin Benny-Chacko}
\address{E. Benny-Chacko,  School of Mathematical Sciences, University of Nottingham, UK}
\email{Emin.BennyChacko1@nottingham.ac.uk}

\author{Ignacio Brevis}
\address{I. Brevis,  School of Mathematical Sciences, University of Nottingham, UK}
\email{ignacio.brevis@nottingham.ac.uk, ignacio.brevis.v@gmail.com}

\author{Kristoffer G. van der Zee}
\address{K.~G. van der Zee,  School of Mathematical Sciences, University of Nottingham, UK}
\email{KG.vanderZee@nottingham.ac.uk}

\begin{abstract}
Minimal-residual methods for PDEs with a residual in a dual space are non-trivial to guarantee stability.
We present a minimal-residual finite element method in which the solution space is a standard finite element space, but neural networks are used as test functions for the evaluation of residual dual norms. The use of a  neural network improves the approximation of the residual representer, and thereby improves the stability of the method.
Our hybrid approach is implemented through a deep residual Uzawa algorithm that alternates finite element updates with neural network training. We prove consistency and convergence results for the Uzawa methodology. We also prove an a~priori error estimate that relies on a suitable Fortin compatibility condition. 
Numerical experiments on 
advection-reaction problems with singular or discontinuous data show that the proposed framework delivers robust and accurate approximations. 

\vspace{5pt}        
\noindent\textbf{Key words:} Minimal Residual Finite Elements, Dual Norms, Saddle-Point Problem, Uzawa Iterative Method, Neural Networks

\vspace{5pt}  
\noindent\textbf{AMS subject classifications:} 65N30, 65N12, 65N22, 65F10, 68T07

\end{abstract}
\maketitle
\tableofcontents

\section{Introduction}

Solving partial differential equations (PDEs) remains a significant challenge in computational science. Numerical methods for these problems can be unstable, making it difficult to achieve accurate and robust solutions. 
For example, classical finite element methods (FEM), particularly in their Galerkin formulation, are widely used for approximating PDEs~\cite{ brenner2008mathematical, ciarlet2002finite, johnson2009numerical, larson2013finite}. However, they may exhibit instabilities depending on the nature of the underlying equations~\cite{  brooks1982streamline, burman2010consistent, codina2000stabilization}. To address these issues, minimal residual (MinRes) finite element methods, which are developed mainly in the discontinuous Petrov-Galerkin (DPG) context as a natural generalization of least squares methods~\cite{demkowicz2010class, demkowicz2011class, 
zitelli2011class}, provide a more stable alternative. By minimizing the residual in a suitable norm, MinRes methods improve numerical robustness and offer a reliable framework for error control~\cite{gopalakrishnan2014analysis, muga2020discretization}. Their variational structure also facilitates the derivation of a posteriori error estimates, as the residual is computed directly in the process~\cite{ carstensen2014posteriori, gopalakrishnan2013five}.

Despite their theoretical strengths, MinRes methods face practical limitations. A key challenge in the formulation of MinRes methods lies in the nature of the dual norm minimization. Specifically, solving a minimal residual problem requires minimizing the residual in the dual norm of the test space~\cite{carstensen2014posteriori, gopalakrishnan2013five, muga2020discretization}. This, by definition, involves taking a supremum over an infinite-dimensional space~\cite{ern2004theory}. For a given discrete trial space \( U_h \subset U \), the MinRes problem takes the form
\begin{equation}\label{minres:full_dual_norm}
    u_h := \arg \min_{u_h \in U_h} \| f - B u_h \|_{V^*}
    = \arg \min_{u_h \in U_h} \sup_{v \in V} \frac{\langle f - B u_h, v \rangle}{\|v\|_V},
\end{equation}
where \( B : U \rightarrow V^* \) is the operator associated with the variational formulation, and \( f \in V^* \) is the right-hand side. The supremum in the dual norm is taken over the continuous infinite-dimensional test space \( V \) (e.g., \( V = H^1 \))~\cite{ broersen2018stability, demkowicz2023mathematical, muga2020discretization}. 

To make the problem computationally tractable, one can approximate the supremum by constructing a finite-dimensional subspace \( V_h \subset V \). However, not every subspace yields a stable or accurate approximation~\cite{zitelli2011class, carstensen2014posteriori, muga2020discretization}. The discrete test space must be carefully designed to satisfy key conditions i.e.,  the inf-sup stability or Fortin condition. Therefore, designing an appropriate discrete test space is a central challenge in MinRes methods. 
These limitations become significant when applied to certain classes of PDEs. Their stability can be difficult to guarantee, particularly in convection-dominated or singularly perturbed regimes~\cite{roos2012robust, fuhrer2024robust}.

Recent advances in machine learning, particularly in deep neural networks, have introduced new opportunities for addressing these challenges. Neural networks are capable of approximating complex, non-smooth functions. They have shown strong performance in learning sharp interfaces, discontinuities, and multiscale structures~\cite{higham2019deep, karniadakis2021physics, peng2021multiscale, uriarte2023deep, weinan2020machine, ainsworth2021galerkin}. These capabilities have motivated the development of hybrid numerical methods. These hybrid methods may combine classical finite element discretisations with neural network approximations of challenging components such as the residual's dual norm. Notably, variational deep learning methods, such as the Deep Ritz method proposed by Weinan E and Bing Yu~\cite{yu2018deep}, have successfully integrated neural networks within variational frameworks. These approaches show promise for high-dimensional and complex PDE problems, see e.g.,~\cite{brevis2021machine,cai2020deep, cai2024efficient, kumar2025deep}.

Building upon these insights, the aim of this paper is to introduce a novel minimal residual finite element framework in which the dual norm is approximated using neural networks. The approach employs a standard finite element space \( U_h \) for the trial functions and replaces the conventional finite-dimensional test space with a neural network set \( \mathcal{M} \), which is a class of neural network functions. Specifically, the dual norm of the residual is computed by taking the supremum over \( \mathcal{M} \). 
This leads to the modified minimal residual formulation:
\begin{equation}\label{minres:nn_dual_norm}
    u_h := \arg \min_{u_h \in U_h} \sup_{v \in \mathcal{M}} \frac{\langle f - B u_h, v \rangle}{\|v\|_{V}}.
\end{equation}
In this setting, the neural network is trained to solve the supremum problem and thus acts as a residual representative, capturing the direction in which the residual is maximized. This formulation enables a flexible and adaptive test space approximation without requiring an explicitly constructed basis~\cite{brevis2022neural, liu2025dual, rojas2024robust}.

The main contributions of this study can be summarized in three points. First, using an abstract MinRes 
formulation, we propose a solution
method for~\eqref{minres:nn_dual_norm}, which employs an Uzawa-type iterative algorithm~\cite{benzi2005numerical, uzawa1958iterative}, alternating between finite element updates of the solution and neural network updates of the residual representation. We demonstrate consistency of our Uzawa problem formulation with~\eqref{minres:nn_dual_norm}. Second, we provide a rigorous theoretical analysis of the proposed method. This includes both a priori and a posteriori error estimates, thereby establishing its stability and convergence, as well as conditions under which the Uzawa method is expected to converge. The error estimates rely on a Fortin condition similar to the one proposed by Rojas et al.~\cite{rojas2024robust}. Our proof of Uzawa convergence is inspired by the unified analysis in Bacuta~\cite{bacuta2006unified}, which provides a rigorous abstract framework for establishing convergence of Uzawa-type algorithms. Third, we demonstrate through numerical experiments that the proposed method achieves robust and accurate approximations for both the solution and the residual. The observed convergence behavior is consistent with the theoretical error estimates. These results confirm that the neural-network-based set can provide a powerful test space construction.



The remainder of the paper is structured as follows. Section~\ref{sec2:chapter7} presents the abstract variational formulation and introduces the hybrid computational methodology. Section~\ref{sec3:chapter7} develops the theoretical analysis, including well-posedness and error bounds. Section~\ref{sec4:chapter7} details the implementation for weak advection-reaction problems. It includes the model formulation, the design of a one-dimensional shallow ReLU neural network, and the construction of the deep residual Uzawa algorithm. Section~\ref{sec5:chapter7} addresses implementation aspects and reports numerical results for representative test cases. Section~\ref{sec:conclusions} concludes with a summary of findings and a discussion of potential future research directions.

\section{Literature Review}

The formulation of MinRes finite element methods has a long and rich history in the numerical solution of PDEs. This is especially true for problems where stability is challenging, such as convection-dominated or first-order systems. These methods aim to minimize the residual in a suitable dual norm, providing a principled and robust approach to discretization.

A significant milestone in this direction is the DPG methodology, originally developed and formalized by Demkowicz and Gopalakrishnan, see, e.g.,~\cite{demkowicz2010class, demkowicz2011class, demkowicz2017discontinuous, demkowicz2012class}. In DPG methods, the optimal test functions are computed locally to approximate the dual norm of the residual. This strategy ensures stability and naturally yields a posteriori error estimators. These methods can be viewed as concrete realizations of the MinRes framework and have been applied successfully to a variety of problems, from diffusion to convection-dominated transport~\cite{demkowicz2023mathematical}.

Recent developments have focused on the construction of test spaces in settings where classical analytical techniques are either infeasible or overly complex. For instance, Führer and Heuer~\cite{fuhrer2024robust} constructed robust test spaces for DPG in singularly perturbed regimes using advanced analytical tools. In contrast, our approach uses neural networks to represent test functions implicitly by solving the supremum problem associated with the dual norm of the residual, leading to a flexible and data-driven alternative to classical constructions.

Beyond Hilbert space settings, researchers have also extended residual minimization methods to Banach spaces. Muga et al.~\cite{muga2020discretization} introduced dual Petrov-Galerkin schemes in Banach spaces to accommodate broader norm structures. Likewise, Cai and colleagues~\cite{cai2024least} developed least-squares and MinRes strategies in negative Sobolev spaces for first-order systems, enhancing stability in complex applications.

Machine learning-based formulations have recently emerged as powerful tools for approximating test spaces or residual representations. Brevis et al.~\cite{brevis2021machine, brevis2022neural,Brevis2024} proposed the machine-learning minimal-residual (ML-MRes) framework, where a neural network parametrizes the test space inner product to achieve goal-oriented discretization. Meanwhile, robust variational physics-informed neural networks (RVPINNs)~\cite{rojas2024robust} introduce a loss function that approximates the dual norm of the residual directly, bringing neural solvers conceptually closer to MinRes FEM.

While traditional strategies such as DPG~\cite{demkowicz2017discontinuous}, FOSLS~\cite{cai1994first, cai1997first, berndt2005analysis, berndt2006analysis}
, and goal-oriented Petrov-Galerkin methods~\cite{ellis2014locally} have laid the theoretical foundation, modern neural-network-based formulations aim to retain their benefits while offering greater scalability and adaptability. The present work builds directly on this trend by introducing a neural-network-enhanced MinRes finite element framework capable of capturing dual norm behavior through learned test functions.

\section{Abstract Setting and Methodology}
\label{sec2:chapter7}

In this section, we begin with the abstract variational setting and then the corresponding saddle-point system. Next, the Uzawa algorithm is used to solve the coupled problem, with residual minimization performed via a Deep Ritz framework.

\subsection{Abstract Setting}

Let $U$ and $V$ be Hilbert spaces with respective duals $U^\ast$ and $V^\ast$. We consider a continuous linear operator $B : U \to V^\ast$ and a bounded linear functional $f \in V^\ast$. The abstract variational problem is to find $u \in U$ such that
\begin{equation}
\label{eq:abs_var_form}
b(u, v) = f(v), \qquad \forall\, v \in V,
\end{equation}
where the bilinear form is given by $b(u,v) := \langle B u, v \rangle_{V^\ast, V}$. Here, $\langle \,\cdot\, ,  \cdot\, \rangle_{V^\ast, V}$ denotes the duality pairing between $V^\ast$ and $V$.

To approximate the solution, we minimize the dual norm of the residual over a finite-dimensional subspace $U_h \subset U$. The associated least-squares functional is defined as:
\begin{equation}
J(w) := \frac{1}{2} \|B w - f\|^2_{V^\ast}, \qquad \forall\, w \in U.
\end{equation}
The minimal-residual approximation is then obtained by solving:
\begin{equation}
\label{eq:semi_inf_MinRes}
u_h = \arg \min_{w_h \in U_h} J(w_h).
\end{equation}

As discussed in~\cite{cohen2012adaptivity}, this minimization problem is equivalent to a mixed (saddle-point) formulation. Specifically, we seek $(u_h, r) \in U_h \times V$ such that
\begin{equation}\label{saddlproblem7}
\begin{aligned}
(r, v)_{V} + b(u_h, v) &= f(v), \qquad \forall\, v \in V, \\
b(w_h, r) &= 0, \qquad \forall\, w_h \in U_h.
\end{aligned}
\end{equation}


\subsubsection*{Well-Posedness: Inf--Sup Condition}
To ensure that the abstract problem~\eqref{eq:abs_var_form} is well-posed, we use the classical inf-sup (or Babu\v{s}ka--Brezzi) condition, 
here stated in the form of the Banach--Ne\v{c}as--Babu\v{s}ka theorem 
(see, e.g., \cite[Theorem~4.2.3]{ern2004theory},
\cite[Theorem~2.6]{boffi2013mixed}, or \cite[Theorem~2.5]{di2011mathematical}).
There exists a constant $\mu > 0$ such that
\begin{equation}
\label{eq:inf-sup}
\inf_{u \in U \setminus \{0\}} \; \sup_{v \in V \setminus \{0\}}
\frac{|b(u, v)|}{\|u\|_{U} \, \|v\|_{V}} \; \ge \; \mu > 0,
\end{equation}
and
\begin{equation}
\label{eq:injectivity}
\forall\, v \in V, \quad \big( b(u, v) = 0 \ \ \forall\, u \in U \big) \ \Rightarrow \ v = 0.
\end{equation}
Under these conditions, the solution $u \in U$ of~\eqref{eq:abs_var_form} 
is unique and satisfies the stability estimate.
\begin{equation}
\label{eq:stability}
\|u\|_{U} \; \le \; \frac{C_b}{\mu} \, \|f\|_{V^*},
\end{equation}
where $C_b > 0$ denotes the continuity constant of the bilinear form 
$b(\cdot,\cdot)$.
\subsection{Uzawa Iteration}

To solve the saddle-point system iteratively, we use a variant of the classical Uzawa algorithm. Given a current approximation \( u_h^k \in U_h \), we compute the residual \( r \in V \) by solving:
\begin{equation}
(r, v)_{V} + \langle B u_h^k, v \rangle = \langle f, v \rangle, \qquad \forall\, v \in V.
\end{equation}

This step can also be interpreted as the minimization problem:
\begin{equation}
r = \arg \min_{r \in V} \left( \frac{1}{2} \|r\|^2_{V} - \langle f - B u_h^k, r \rangle \right),
\end{equation}
which reflects a least-squares characterization of the residual in the test space norm.
After obtaining \( r \), the next approximation of the solution \(u\) is obtained by a relaxed update:
\begin{equation}
\langle u_h^{k+1}, w_h \rangle = \langle u_h^k, w_h \rangle + \rho\, b(w_h, r), \qquad \forall\, w_h \in U_h,
\end{equation}
where \( \rho > 0 \) is a relaxation parameter.

This results in the following iterative scheme:
\begin{equation}\label{eq:uzawa-res}
\left\{
\begin{aligned}
&\text{(1) Solve for } r^{k+1} \in V: \\
&\qquad (r^{k+1}, v)_{V} = \langle f, v \rangle - b(u_h^k, v), \quad \forall\, v \in V, \\[0.5em]
&\text{(2) Update } u_h^{k+1} \in U_h: \\
&\qquad \langle u_h^{k+1}, w_h \rangle = \langle u_h^k, w_h \rangle + \rho\, b(w_h, r^{k+1}), \quad \forall\, w_h \in U_h.
\end{aligned}
\right.
\end{equation}

We now discuss the neural network approximation of the residual.


\subsection{Deep Ritz-Based Residual Minimization}

To improve adaptability in the residual computation, we introduce a neural network parameterization given by the set  \( \mathcal{M}_n \subset V := H^1(\Omega) \) to approximate the test space $V$. This idea is motivated by the Deep Ritz method~\cite{yu2018deep}, which frames variational problems as neural network optimization tasks.

In this setting, the residual update step becomes:
\begin{equation}
\label{eq:hybrid-minres}
\left\{
\begin{aligned}
&\text{Given } u_h^k \in U_h, \text{ find } r_n \in \mathcal{M}_n \text{ such that:} \\
&\qquad r_n = \arg\min_{v_n \in \mathcal{M}_n} \left( \frac{1}{2} \|v_n\|^2_{V} - \langle f - B u_h^k, v_n \rangle \right), \\[0.5em]
&\text{then update } u_h^{k+1} \in U_h \text{ by:} \\
&\qquad \langle u_h^{k+1}, w_h \rangle = \langle u_h^k, w_h \rangle + \rho\, b(w_h, r_n), \quad \forall\, w_h \in U_h.
\end{aligned}
\right.
\end{equation}

Here, the residual function \( r_n \) is approximated using a neural network with parameters \( \theta \), i.e., \( r_n(x; \theta) \in \mathcal{M}_n \). The network is trained by minimizing the functional:
\begin{equation}
\mathcal{J}_r(\theta) = \frac{1}{2} \|r_n(\cdot; \theta)\|^2_{V} - \langle f - B u_h^k, r_n(\cdot; \theta) \rangle.
\end{equation}

Upon the convergence of Uzawa iteration, $u_h$ is the solution of the following MinRes formulation with neural network dual norm: 
%
\begin{equation}\label{dis: full dual norm}
    u_h := \arg \min_{} \| f- B u_h \|_{\mathcal{M}_n^*}
     = \arg \min \sup_{v_n \in \mathcal{M}_n} \frac{\langle f- B u_h, v_n \rangle}{\|v_n\|_{V}}.
\end{equation}

The MinRes formulation with a neural network dual norm \eqref{dis: full dual norm} relies critically on how the residual is represented and updated. To make this connection precise, we first establish an equivalence between the supremum and minimization formulations of the residual. We then use this characterization to study the convergence properties of the inexact Uzawa iteration.

\subsection{Residual Formulation and Inexact Uzawa Convergence}
\label{sec:residual-uzawa}
In particular, for a fixed \( u_h \in U_h \), we define the residual \( r_n \in \mathcal{M}_n \) to be the Riesz representative of the functional \( f - B u_h \in \mathcal{M}_n^* \). 
This is initially expressed in supremum form, but can be equivalently reformulated as a minimization. We now formalize this equivalence:

\begin{lemma}[Equivalence of Supremum and Minimization Formulations]
\label{lemma:sup-min-equivalence-direct}
Let \( f \in V^* \), \( u_h \in U_h \), and \( B : U \to V^* \) a bounded linear operator. Suppose \( \mathcal{M}_n \) is a finite-dimensional residual test set
(e.g., a set of shallow ReLU network functions). 
Then the optimal residual \( r_n^* \in \mathcal{M}_n \) defined by
\begin{equation}
\label{eq:supremum-form}
r_n^* = \arg\sup_{v_n \in \mathcal{M}_n} \frac{\langle f - B u_h, v_n \rangle}{\|v_n\|}
\end{equation}
subject to the normalization \( \|r_n^* \|^2 = \langle f - B u_h , r_n^* \rangle ,\) is the solution of the unconstrained minimization problem:
\begin{equation}
\label{eq:minimisation-form}
r_n^* = \arg\min_{v_n \in \mathcal{M}_n} \left( \frac{1}{2} \|v_n\|^2 - \langle f - B u_h, v_n \rangle \right).
\end{equation}
\end{lemma}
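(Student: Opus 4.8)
The statement is essentially the elementary fact that maximizing a linear functional over a ray and minimizing the associated quadratic both pick out the Riesz representative, so the plan is to make the normalization bookkeeping explicit. First I would introduce the functional $\ell := f - B u_h \in V^*$ (restricted to $\mathcal{M}_n$) and observe that, because $\mathcal{M}_n$ is finite-dimensional, the supremum in \eqref{eq:supremum-form} is attained; the maximizer is determined only up to positive scaling, which is exactly why the normalization $\|r_n^*\|^2 = \langle \ell, r_n^* \rangle$ is imposed to fix a unique representative. I would note that this normalization forces $\langle \ell, r_n^*\rangle = \|r_n^*\|^2 \ge 0$, so $r_n^*$ realizes the supremum with value $\|r_n^*\|$ (after dividing by $\|r_n^*\|$), i.e. $\sup_{v_n \in \mathcal{M}_n} \langle \ell, v_n\rangle / \|v_n\| = \|r_n^*\|$.

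Next I would analyze the minimization problem \eqref{eq:minimisation-form}. Writing $\Phi(v_n) := \tfrac12\|v_n\|^2 - \langle \ell, v_n\rangle$, I would argue existence of a minimizer by coercivity and continuity on the finite-dimensional set (or, if $\mathcal{M}_n$ is genuinely a linear subspace, by strict convexity giving uniqueness). The key computation is the completion of squares / first-order condition: for any direction $w_n$ one has $\Phi(v_n + t w_n) = \Phi(v_n) + t(\langle v_n, w_n\rangle - \langle \ell, w_n\rangle) + \tfrac{t^2}{2}\|w_n\|^2$, so a minimizer $\tilde r$ satisfies $\langle \tilde r, w_n\rangle = \langle \ell, w_n\rangle$ for all admissible $w_n$ — the Galerkin/Riesz identity on $\mathcal{M}_n$. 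Taking $w_n = \tilde r$ yields $\|\tilde r\|^2 = \langle \ell, \tilde r\rangle$, which is precisely the normalization condition in the lemma.

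Then I would close the loop in both directions. Given the minimizer $\tilde r$, the identity $\langle \tilde r, w_n\rangle = \langle \ell, w_n\rangle$ shows that for any $v_n$, $\langle \ell, v_n\rangle / \|v_n\| = \langle \tilde r, v_n\rangle / \|v_n\| \le \|\tilde r\|$ by Cauchy–Schwarz, with equality when $v_n \parallel \tilde r$; hence $\tilde r$ attains the supremum and satisfies the normalization, so $\tilde r = r_n^*$. Conversely, any $r_n^*$ solving \eqref{eq:supremum-form} with the stated normalization must satisfy the same Riesz identity on $\mathcal{M}_n$ (a normalized maximizer of a ratio of a linear over a quadratic form is stationary, giving $\langle r_n^*, w_n\rangle = c\,\langle \ell, w_n\rangle$ for a scalar $c$, and the normalization forces $c=1$), and therefore minimizes $\Phi$. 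This establishes the claimed equivalence.

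The only genuine subtlety — the main obstacle — is the precise meaning of ``$\mathcal{M}_n$ is a finite-dimensional residual test set'' when it is a set of ReLU network functions rather than a linear space: the stationarity argument and the completion-of-squares step implicitly use that one can move in linear combinations (directions $w_n$) within the admissible set, and that $\|\cdot\|$ restricted to it behaves like a norm. I would handle this by stating the result for $\mathcal{M}_n$ interpreted as (the span of) a finite parametrized family, or equivalently by noting that the arguments are local: at the optimum, perturbations within $\mathcal{M}_n$ suffice for the first-order condition, and the identification of $r_n^*$ with the Riesz representative of $\ell$ on $\operatorname{span}\mathcal{M}_n$ is what \eqref{dis: full dual norm} ultimately needs. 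With that clarification the proof is a short computation and no further difficulty arises.
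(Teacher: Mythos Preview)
Your proposal is correct and takes a genuinely different route from the paper. The paper proceeds by direct algebraic manipulation: it imposes the normalization as a constraint on the admissible $v_n$, uses that under this constraint the quotient equals $\sqrt{\langle f-Bu_h, v_n\rangle}$, and then argues that the unconstrained quadratic $J(v_n)=\|v_n\|^2-2\langle f-Bu_h,v_n\rangle$ is minimized precisely on the constraint set. You instead work through the Riesz representation: you derive the first-order condition $\langle \tilde r,w_n\rangle=\langle\ell,w_n\rangle$ for the minimizer of $\Phi$, recover the normalization by testing with $w_n=\tilde r$, and close the equivalence via Cauchy--Schwarz. Your argument is more structural and establishes both directions explicitly; it also confronts the subtlety that $\mathcal{M}_n$ need not be a linear space, which the paper's proof leaves implicit. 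The paper's approach is shorter and more elementary, but its key step (``$J$ attains its minimum precisely when the constraint is satisfied'') is asserted rather than derived, whereas your first-order analysis actually justifies it.
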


\begin{proof}
We begin with the supremum formulation:
\[
r_n^* = \arg\sup_{v_n \in \mathcal{M}_n} \frac{\langle f - B u_h, v_n \rangle}{\|v_n\|},
\]
subject to the constraint
\[
\|v_n\|^2 = \langle f - B u_h, v_n \rangle.
\]

Under this constraint, we observe that
\[
\frac{\langle f - B u_h, v_n \rangle}{\|v_n\|} = \sqrt{\langle f - B u_h, v_n \rangle},
\]
so maximizing the quotient is equivalent to maximizing the inner product:
\[
\arg\sup_{\substack{v_n \in \mathcal{M}_n \\ \|v_n\|^2 = \langle f - B u_h, v_n \rangle}} \langle f - B u_h, v_n \rangle,
\]
which is equivalent to
\[
\arg\inf_{\substack{v_n \in \mathcal{M}_n \\ \|v_n\|^2 = \langle f - B u_h, v_n \rangle}} -\langle f - B u_h, v_n \rangle.
\]

To derive an unconstrained problem, we construct an equivalent objective whose minimum is achieved at the same point. Since on the constraint set we have \( \langle f - B u_h, v_n \rangle = \|v_n\|^2 \), we observe that
\[
-\langle f - B u_h, v_n \rangle = -\|v_n\|^2.
\]
This suggests the function
\[
J(v_n) := \|v_n\|^2 - 2 \langle f - B u_h, v_n \rangle
\]
attains its minimum precisely when the constraint is satisfied, i.e., when \( \|v_n\|^2 = \langle f - B u_h, v_n \rangle \).

Therefore, minimizing \( J(v_n) \) over all \( \mathcal{M}_n \) recovers the same point as the constrained problem. Dividing by 2 (which does not change the minimizer), we obtain the equivalent unconstrained problem:
\[
r_n^* = \arg\min_{v_n \in \mathcal{M}_n} \left( \frac{1}{2} \|v_n\|^2 - \langle f - B u_h, v_n \rangle \right),
\]
which completes the proof.
\end{proof}

This network-based test space allows for automatic adaptation to problem features, such as discontinuities or sharp gradients. It enhances the flexibility and performance of residual minimization, especially in cases where constructing suitable test spaces manually is difficult.

We now analyze the Uzawa iteration \eqref{eq:uzawa-res} in the case where the residual is computed inexactly, for example, by a neural network.

Recall the Riesz maps
$R_V:V\to V^*$, $R_{U_h}:U_h \to U_h^*$ and the operator $B_h:U_h\to V^*$ with adjoint $B_h^*:V\to U_h^*$.

\begin{theorem}[Convergence of the inexact Uzawa iteration]\label{Thm_uzaw_con}
Let $(u,r)\in U_h \times V$ be the exact saddle-point solution of \eqref{saddlproblem7}. Consider the inexact Uzawa iteration
\[
r^k = R_V^{-1}(l - B_h u^k), 
\qquad 
u^{k+1} = u^k + \rho R_U^{-1} B_h^\dagger r^k_\delta,
\]
where $r^k$ is replaced by an approximation $r^k_\delta$ satisfying
\begin{equation}\label{assump1}
    \|r^k_\delta - r^k\|_V \leq \delta \|r^k\|_V.
\end{equation}
Let $0 < \rho < \tfrac{2}{C_b^2}$ and assume
\begin{equation}\label{delta_assump}
    \delta < \frac{1 - \omega}{2 C_b^2},
    \qquad 
    \omega = \max \{\, |1 - \rho \mu^2|, \; |1 - \rho C_b^2| \,\},
\end{equation}
then the sequence $(u^k, r^k_\delta)$ converges to the exact solution $(u_h,r) \in U_h \times V$.
\end{theorem}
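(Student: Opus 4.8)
The strategy follows the abstract Uzawa framework of Bacuta~\cite{bacuta2006unified}: recast the iteration as a fixed-point recursion for the error in $U_h$, identify the iteration operator as $I-\rho S$ with $S:=R_U^{-1}B_h^{\dagger}R_V^{-1}B_h$, bound its norm through the inf--sup and continuity constants, and then control the extra drift introduced by the inexact residual. First I would record the fixed-point structure: from the first equation of \eqref{saddlproblem7} the exact residual satisfies $r=R_V^{-1}(l-B_hu_h)$, and from the second equation $B_h^{\dagger}r=0$, so that $u_h=u_h+\rho R_U^{-1}B_h^{\dagger}r$, i.e. $(u_h,r)$ is a fixed point of the exact iteration. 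Writing $e^k:=u^k-u_h$ and subtracting the fixed-point relations gives $r^k-r=-R_V^{-1}B_he^k$ and
\[
e^{k+1}=(I-\rho S)\,e^k+\rho\,R_U^{-1}B_h^{\dagger}\bigl(r^k_\delta-r^k\bigr),\qquad S:=R_U^{-1}B_h^{\dagger}R_V^{-1}B_h .
\]

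Next I would analyze $S$ on $\bigl(U_h,(\cdot,\cdot)_U\bigr)$. A direct computation using $\langle B_h^{\dagger}v,w_h\rangle_{U_h^\ast,U_h}=b(w_h,v)$ and the definition of the Riesz maps shows $(Sw_1,w_2)_U=(R_V^{-1}B_hw_1,R_V^{-1}B_hw_2)_V$, hence $S$ is self-adjoint and positive with $(Sw,w)_U=\|B_hw\|_{V^\ast}^2$. Since $U_h\subseteq U$ and the test space $V$ is the full space, the inf--sup condition \eqref{eq:inf-sup} restricted to trial functions in $U_h$ still holds with constant $\mu$, so $\mu^2\|w\|_U^2\le(Sw,w)_U\le C_b^2\|w\|_U^2$, i.e. $\sigma(S)\subset[\mu^2,C_b^2]$. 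Consequently $\|I-\rho S\|_{U\to U}\le\omega=\max\{|1-\rho\mu^2|,\,|1-\rho C_b^2|\}$, and since $\mu\le C_b$ this is $<1$ precisely when $0<\rho<2/C_b^2$.

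Then I would bound the perturbation term. From $\|R_U^{-1}B_h^{\dagger}v\|_U=\sup_{w_h\ne0}b(w_h,v)/\|w_h\|_U\le C_b\|v\|_V$ and assumption \eqref{assump1},
\[
\bigl\|\rho R_U^{-1}B_h^{\dagger}(r^k_\delta-r^k)\bigr\|_U\le\rho C_b\,\delta\,\|r^k\|_V .
\]
To close the recursion I need $\|r^k\|_V$ in terms of $\|e^k\|_U$, and here the second saddle-point equation is decisive: $b(w_h,r)=0$ means $r\perp_V\operatorname{range}\bigl(R_V^{-1}B_h|_{U_h}\bigr)$, while $r^k-r=-R_V^{-1}B_he^k$ lies in that range; hence $\|r^k\|_V^2=\|r\|_V^2+\|B_he^k\|_{V^\ast}^2\le\|r\|_V^2+C_b^2\|e^k\|_U^2$. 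Combining the three estimates yields
\[
\|e^{k+1}\|_U\le\bigl(\omega+\rho C_b^2\delta\bigr)\|e^k\|_U+\rho C_b\delta\,\|r\|_V ,
\]
and under \eqref{delta_assump} the factor $q:=\omega+\rho C_b^2\delta$ satisfies $q<1$ (using $\rho C_b^2<2$), so $\|e^k\|_U$ decays geometrically toward the level $\rho C_b\delta\|r\|_V/(1-q)$; driving $\delta$ arbitrarily small (or to $0$) then forces $e^k\to0$, after which $r^k=R_V^{-1}(l-B_hu^k)\to r$ in $V$ and hence $r^k_\delta\to r$ as well.

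The main obstacle is the coupling between the last two displays: the perturbation bound contains $\|r^k\|_V$, which itself grows with $\|e^k\|_U$, so a crude estimate fails to contract. Resolving this requires the $V$-orthogonal splitting $r^k=r\oplus(r^k-r)$ furnished by the second saddle-point equation, after which one must verify that the range of admissible $\delta$ in \eqref{delta_assump} is genuinely compatible with $0<\rho<2/C_b^2$ so that $q<1$. A secondary point to track carefully is that, with the relative inexactness model \eqref{assump1} as stated, one obtains convergence of $u^k$ only up to an $O(\delta\|r\|_V)$ ball around $u_h$; exact convergence to $(u_h,r)$ requires either strengthening \eqref{assump1} so that the residual error is measured relative to $\|r^k-r\|_V$ (which vanishes with $e^k$) or letting $\delta\to0$ along the iteration.
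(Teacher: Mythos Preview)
Your overall strategy is exactly the one the paper follows: derive the error recursion $e^{k+1}=(I-\rho S)e^k+\rho R_U^{-1}B_h^{\dagger}(r^k_\delta-r^k)$ with $S=R_U^{-1}B_h^{\dagger}R_V^{-1}B_h$, bound $\|I-\rho S\|\le\omega$ via the spectral inclusion $\sigma(S)\subset[\mu^2,C_b^2]$ coming from continuity and inf--sup, and control the perturbation through \eqref{assump1}. The paper's derivation of each of these pieces is essentially identical to yours.

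The one substantive divergence is in converting $\|r^k\|_V$ into something controlled by $\|e^k\|_U$. The paper simply passes from $\delta\|r^k\|_V$ to $\delta\|r^k-r\|_V$ (citing \eqref{update1}) and then to $\delta C_b\|e^k\|_U$, which produces the clean contraction $\|e^{k+1}\|_U\le(\omega+\rho C_b^2\delta)\|e^k\|_U$ and hence convergence to the exact pair. That first replacement is not justified in the text: \eqref{update1} bounds only $\|r^k-r\|_V$, not $\|r^k\|_V$, and the two differ whenever $r\ne0$. Your orthogonal decomposition $\|r^k\|_V^2=\|r\|_V^2+\|r^k-r\|_V^2$, obtained from the second saddle-point equation, is the correct refinement and is precisely what generates the extra constant term $\rho C_b\delta\|r\|_V$ in your recursion. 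Your closing caveat---that under \eqref{assump1} as written one obtains convergence only to an $O(\delta\|r\|_V)$-ball, with exact convergence requiring either $r=0$, a hypothesis relative to $\|r^k-r\|_V$, or $\delta\to0$ along the iteration---is therefore not a gap in your argument but a correct observation about the stated hypotheses; the paper's proof reaches the stronger conclusion by eliding this step.
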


\begin{remark}
Theorem~\ref{Thm_uzaw_con} illustrates that if the neural network approximation 
of the residual $r_n$ in \eqref{eq:hybrid-minres} is sufficiently accurate, 
then the inexact Uzawa iteration converges to the exact solution.
\end{remark}

\begin{proof}
Let $e_r^k = r^k_\delta - r$ and $e_u^k = u_h^k - u_h$, where $(u_h,r)$ is the exact solution.  
From the definition of $r^k$ it follows that
\[
R_V r^k = l - B_h u_h^k,
\qquad 
u_h^{k+1} = u_h^k + \rho R_{U_h}^{-1} B_h^\dagger r^k_\delta.
\]
Subtracting the relations for $r^k$ and $r$ yields
\begin{equation}\label{update1}
    R_V (r^k - r) = -B_h(u_h^k - u_h) = -B_h e_u^k.
\end{equation}
Consequently,
\[
R_V (r_\delta^k - r) 
    = R_V( r_\delta^k - r^k ) - B_h e_u^k,
\]
and therefore
\[
R_V e_r^k = R_V( r_\delta^k - r^k) - B_h e_u^k.
\]
Taking norms gives
\begin{align*}
    \|e_r^k\|_V 
    &\leq \|B_h e_u^k\|_{V^*} + \| r_\delta^k - r^k\|_V \\
    &\leq C_b \|e_u^k\|_U + \delta \|r^k\|_V \quad \text{(by \eqref{assump1})} \\
    &\leq C_b \|e_u^k\|_U + \delta \|r^k - r\|_V \\
    &\leq C_b \|e_u^k\|_U + \delta C_b \|e_u^k\|_U \quad \text{(by \eqref{update1})},
\end{align*}
so that
\begin{equation}\label{er&eu}
    \|e_r^k\|_V \leq C_b(1 + \delta)\, \|e_u^k\|_U.
\end{equation}
Turning to the update for $u_h^k$, we obtain
\begin{align*}
    e_u^{k+1} 
    &= u_h^{k+1} - u_h 
     = u_h^k - u_h + \rho R_{U_h}^{-1} B_h^\dagger (r_\delta^k - r) \\
    &= e_u^k + \rho R_{U_h}^{-1} B_h^\dagger \big[(r_\delta^k - r^k) + (r^k - r)\big] \\
    &= e_u^k + \rho R_{U_h}^{-1} B_h^\dagger (r_\delta^k - r^k) 
       - \rho R_{U_h}^{-1} B_h^\dagger R_V^{-1} B_h (u_h^k - u_h) \quad \text{(by \eqref{update1})} \\
    &= \big(I - \rho R_{U_h}^{-1} B_h^\dagger R_V^{-1} B_h\big) e_u^k 
       + \rho R_{U_h}^{-1} B_h^\dagger (r_\delta^k - r^k).
\end{align*}
Hence
\begin{align*}
    \|e_u^{k+1}\|_U 
    &\leq \omega \|e_u^k\|_U + \rho C_b \|r_\delta^k - r^k\|_V \\
    &\leq \omega \|e_u^k\|_U + \rho C_b \delta \|r^k\|_V \\
    &\leq \omega \|e_u^k\|_U + \rho C_b^2 \delta \|e_u^k\|_U \quad \text{(by \eqref{update1})}.
\end{align*}
This gives the recurrence
\[
    \|e_u^{k+1}\|_U \leq \big(\omega + \rho C_b^2 \delta\big) \|e_u^k\|_U.
\]
By assumption \eqref{delta_assump}, the factor satisfies $\omega + \rho C_b^2 \delta < 1$, and thus $\|e_u^{k+1}\|_U < \|e_u^k\|_U$. It follows that $u_h^k \to u_h$, and from \eqref{er&eu} we also conclude that $r^k_\delta \to r$.
\end{proof}

\section{Convergence Analysis}
\label{sec3:chapter7}

In this section, we analyze how a neural-network-parameterized set \( \mathcal{M}_n \subset H^1(\Omega) \) impacts the stability and accuracy of the approximation obtained via residual minimization. 

\subsection{Stability and Discrete Inf-Sup Condition}
To ensure discrete stability, we employ the concept of a Fortin operator.

\begin{definition}[Fortin Operator]\label{def: fortin operator}
Let \( U_h \subset U \) be a linear subspace and \( \mathcal{M}_n \subset V \) be a neural network subset. A linear operator \begin{equation}\label{fortin operator} \Pi : V \to \mathcal{M}_n \end{equation} is a Fortin operator if:
\begin{enumerate}
    \item \textbf{Stability:} \( \| \Pi v \|_{V} \leq C_\Pi \| v \|_{V} \), for all \( v \in V \),
    \item \textbf{Orthogonality:} \( b(u_h, v - \Pi v) = 0 \), for all \( u_h \in U_h \) and \( v \in V \).
\end{enumerate}
\end{definition}

\begin{proposition}[Discrete Inf-Sup Stability]
If the continuous inf-sup condition holds with constant \( \mu \), and a Fortin operator \( \Pi : V \to \mathcal{M}_n \) exists with stability constant \( C_\Pi \), then the discrete inf-sup condition is satisfied with
\begin{equation}
\label{eq:disc_infsup}
\inf_{u_h \in U_h \setminus \{0\}} \sup_{v_n \in \mathcal{M}_n \setminus \{0\}} \frac{b(u_h, v_n)}{\|u_h\|_{U} \|v_n\|_{V}} \geq \frac{\mu}{C_\Pi}.
\end{equation}
\end{proposition}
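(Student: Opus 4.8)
The plan is to run the standard Fortin argument: take an arbitrary $u_h \in U_h \setminus \{0\}$, apply the continuous inf--sup condition \eqref{eq:inf-sup} to obtain a near-optimal test function $v \in V$, and then \emph{push it into the discrete test set} via the Fortin operator $\Pi$, checking that neither the numerator nor the denominator deteriorates by more than the controllable factors $1$ (orthogonality) and $C_\Pi$ (stability). Concretely, fix $u_h \neq 0$. By \eqref{eq:inf-sup} there is a $v \in V \setminus \{0\}$ with
\[
\frac{|b(u_h, v)|}{\|u_h\|_U \, \|v\|_V} \;\ge\; \mu - \epsilon
\]
for any $\epsilon > 0$ (or, if the supremum is attained, simply $\ge \mu$). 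I would then set $v_n := \Pi v \in \mathcal{M}_n$.

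The two defining properties of the Fortin operator give the estimate. First, orthogonality (property 2 in Definition~\ref{def: fortin operator}) yields $b(u_h, v - \Pi v) = 0$, hence $b(u_h, v_n) = b(u_h, v)$, so the numerator is exactly preserved. Second, stability (property 1) gives $\|v_n\|_V = \|\Pi v\|_V \le C_\Pi \|v\|_V$, so the denominator grows by at most the factor $C_\Pi$. Combining,
\[
\frac{|b(u_h, v_n)|}{\|u_h\|_U \, \|v_n\|_V}
\;\ge\; \frac{|b(u_h, v)|}{C_\Pi \, \|u_h\|_U \, \|v\|_V}
\;\ge\; \frac{\mu - \epsilon}{C_\Pi}.
\]
Taking the supremum over $v_n \in \mathcal{M}_n \setminus \{0\}$ on the left (the chosen $v_n$ is one admissible competitor) and then letting $\epsilon \to 0$, and finally taking the infimum over $u_h \in U_h \setminus \{0\}$, gives \eqref{eq:disc_infsup}.

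One small bookkeeping point needs care: we must ensure $v_n = \Pi v \neq 0$, so that the quotient is well-defined and $v_n$ is an admissible element of $\mathcal{M}_n \setminus \{0\}$. This follows since $b(u_h, \Pi v) = b(u_h, v) \neq 0$ (the latter because the quotient is at least $\mu - \epsilon > 0$ for $\epsilon$ small), which forces $\Pi v \neq 0$. A second point worth a remark is linearity of $\Pi$: it is used only insofar as $\Pi v$ must lie in $\mathcal{M}_n$ and satisfy the two stated properties, which are exactly what Definition~\ref{def: fortin operator} provides, so no extra hypothesis is needed. I do not anticipate a genuine obstacle here — the argument is the classical one — the only thing to be careful about is not claiming the supremum in \eqref{eq:inf-sup} is attained unless one wants to invoke finite dimensionality, hence the $\epsilon$-argument above, which is clean and avoids that issue.
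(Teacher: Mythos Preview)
Your proof is correct and follows essentially the same classical Fortin argument as the paper: preserve the numerator via the orthogonality property $b(u_h,\Pi v)=b(u_h,v)$ and control the denominator via the stability bound $\|\Pi v\|_V\le C_\Pi\|v\|_V$. If anything, your treatment is slightly more careful than the paper's, which writes the chain of suprema directly without the $\epsilon$-argument or the explicit check that $\Pi v\neq 0$.
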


\begin{proof}
The proof follows the classical argument known as Fortin’s Lemma; see e.g.,~\cite[Section~5.4]{boffi2013mixed} and~\cite[Theorem~1]{ern2016converse}. Since the continuous inf-sup condition holds with constant \( \mu \), and a Fortin operator \( \Pi : V \to \mathcal{M}_n \) exists satisfying
\[
b(u_h, \Pi v) = b(u_h, v) \quad \text{for all } u_h \in U_h,\, v \in V,
\]
and
\[
\|\Pi v\|_V \leq C_\Pi \|v\|_V \quad \text{for all } v \in V,
\]
we proceed as follows. For any \( u_h \in U_h \setminus \{0\} \), the continuous inf-sup condition gives
\[
\sup_{v \in V \setminus \{0\}} \frac{b(u_h, v)}{\|v\|_V} \geq \mu \|u_h\|_U.
\]
Applying the Fortin property and the stability of \( \Pi \), we have
\[
\sup_{v_n \in \mathcal{M}_n \setminus \{0\}} \frac{b(u_h, v_n)}{\|v_n\|_V} \geq \sup_{v \in V \setminus \{0\}} \frac{b(u_h, \Pi v)}{\|\Pi v\|_V}
= \sup_{v \in V \setminus \{0\}} \frac{b(u_h, v)}{\|\Pi v\|_V}
\geq \frac{1}{C_\Pi} \sup_{v \in V \setminus \{0\}} \frac{b(u_h, v)}{\|v\|_V}.
\]
Combining these estimates yields
\[
\sup_{v_n \in \mathcal{M}_n \setminus \{0\}} \frac{b(u_h, v_n)}{\|v_n\|_V} \geq \frac{\mu}{C_\Pi} \|u_h\|_U,
\]
which establishes the discrete inf-sup condition with constant \( \frac{\mu}{C_\Pi} \).
\end{proof}

\begin{remark}
In contrast to classical finite element settings, the test space \( \mathcal{M}_n \) considered here is represented by a neural network. While this allows for adaptive and expressive residual approximations, verifying conditions such as Fortin stability in practice may rely on empirical testing or theoretical guarantees for neural function approximation~\cite{yarotsky2017error, cybenko1989approximation}.
\end{remark}

\begin{theorem}[Well-Posedness and Stability]
\label{thm:wellposedness}
Let the bilinear form \( b(\cdot,\cdot) \) be continuous and satisfy the inf-sup condition. Then, the continuous problem admits a unique solution \( u \in U \) and
\begin{equation}
\label{eq:stab_cont}
\| u \|_{U} \leq \frac{1}{\mu} \| f \|_{V^*},
\end{equation}
where \( \mu > 0 \) is the inf-sup constant and \( f \in V^* \) is the linear functional representing the right-hand side.

Moreover, if a Fortin operator \eqref{fortin operator} exists, the solution $u_h$ of the discrete problem~\eqref{dis: full dual norm} satisfies


\begin{equation}
\label{eq:stab_cont2}
\| u_h \|_{U} \leq \frac{2}{\mu} \| f \|_{\mathcal{M}^*},
\end{equation}
\end{theorem}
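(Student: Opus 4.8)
The plan is to prove the two estimates in Theorem~\ref{thm:wellposedness} separately, treating the continuous estimate \eqref{eq:stab_cont} first and then the discrete estimate \eqref{eq:stab_cont2}.

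For the continuous bound \eqref{eq:stab_cont}, I would simply invoke the Banach--Ne\v{c}as--Babu\v{s}ka framework already recalled in the excerpt around \eqref{eq:inf-sup}--\eqref{eq:stability}. Given the inf-sup condition with constant $\mu$ and the injectivity condition \eqref{eq:injectivity}, the problem \eqref{eq:abs_var_form} has a unique solution. To get the sharp constant $1/\mu$ (rather than $C_b/\mu$ as in \eqref{eq:stability}), I would argue directly: for the solution $u$, pick $v \in V$ with $\|v\|_V = 1$ nearly attaining the supremum in the inf-sup quotient, so that $\mu \|u\|_U \le \sup_{v} |b(u,v)|/\|v\|_V = \sup_v |f(v)|/\|v\|_V = \|f\|_{V^*}$, which rearranges to \eqref{eq:stab_cont}. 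This is routine.

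For the discrete estimate \eqref{eq:stab_cont2}, the key input is the Proposition on Discrete Inf-Sup Stability: existence of a Fortin operator $\Pi : V \to \mathcal{M}_n$ with constant $C_\Pi$ gives the discrete inf-sup constant $\mu/C_\Pi$. First I would characterize $u_h$ from \eqref{dis: full dual norm}: the minimizer of $\|f - Bu_h\|_{\mathcal{M}_n^*}$ over $U_h$ is characterized, via the saddle-point formulation \eqref{saddlproblem7} with $V$ replaced by $\mathcal{M}_n$, by the existence of $r_n \in \mathcal{M}_n$ with $(r_n, v_n)_V + b(u_h, v_n) = f(v_n)$ for all $v_n \in \mathcal{M}_n$ and $b(w_h, r_n) = 0$ for all $w_h \in U_h$. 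Then I would apply the discrete inf-sup bound to $u_h$: there is $v_n \in \mathcal{M}_n \setminus\{0\}$ with $b(u_h, v_n) \ge \frac{\mu}{C_\Pi}\|u_h\|_U \|v_n\|_V$ (up to $\epsilon$). Using the first saddle-point equation, $b(u_h, v_n) = f(v_n) - (r_n, v_n)_V \le (\|f\|_{\mathcal{M}_n^*} + \|r_n\|_V)\|v_n\|_V$. It remains to bound $\|r_n\|_V$: testing the first equation with $v_n = r_n$ and using $b(u_h, r_n) = 0$ gives $\|r_n\|_V^2 = f(r_n) \le \|f\|_{\mathcal{M}_n^*}\|r_n\|_V$, hence $\|r_n\|_V \le \|f\|_{\mathcal{M}_n^*}$. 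Combining, $\frac{\mu}{C_\Pi}\|u_h\|_U \le 2\|f\|_{\mathcal{M}_n^*}$, i.e. $\|u_h\|_U \le \frac{2 C_\Pi}{\mu}\|f\|_{\mathcal{M}^*}$.

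The main obstacle I anticipate is the constant: the statement \eqref{eq:stab_cont2} reads $2/\mu$, not $2C_\Pi/\mu$, so either the intended convention is that the Fortin operator is normalized with $C_\Pi = 1$ (as is sometimes assumed), or $\mu$ in the statement tacitly denotes the \emph{discrete} inf-sup constant $\mu/C_\Pi$. I would flag this, and either absorb $C_\Pi$ into the constant or state the bound as $\|u_h\|_U \le \frac{2}{\mu_h}\|f\|_{\mathcal{M}^*}$ with $\mu_h = \mu/C_\Pi$ the discrete inf-sup constant from the Proposition. A secondary technical point is justifying that the minimization \eqref{dis: full dual norm} over the neural network set $\mathcal{M}_n$ is genuinely equivalent to the mixed formulation — this requires $\mathcal{M}_n$ to be a linear subspace (or at least that the relevant Riesz representation arguments go through), which is consistent with the finite-dimensionality assumed in Lemma~\ref{lemma:sup-min-equivalence-direct}; I would note this linearity hypothesis explicitly where it is used.
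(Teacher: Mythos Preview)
Your proof is correct and matches the paper's argument for the continuous estimate \eqref{eq:stab_cont} exactly. For the discrete estimate \eqref{eq:stab_cont2}, your route differs slightly from the paper's: you pass through the saddle-point characterization and bound $\|r_n\|_V$ via the orthogonality $b(u_h,r_n)=0$, whereas the paper works directly with the minimization \eqref{dis: full dual norm}, splitting $\langle Bu_h, v_\theta\rangle = \langle Bu_h - f, v_\theta\rangle + \langle f, v_\theta\rangle$ and bounding the first supremum by comparing $u_h$ to $w_h = 0$ (since $u_h$ minimizes $\|f - B\cdot\|_{\mathcal{M}_n^*}$ over $U_h$). Both arguments yield the same bound $\|u_h\|_U \le \tfrac{2}{\tilde\mu}\|f\|_{\mathcal{M}_n^*}$ with $\tilde\mu$ the discrete inf-sup constant; the paper's version has the advantage of needing only the minimization property, avoiding the linearity hypothesis on $\mathcal{M}_n$ you flagged for the saddle-point equivalence. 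Your observation about the constant is on point: the paper's own proof in fact concludes with $2/\tilde\mu$ rather than $2/\mu$, so the discrepancy you anticipated is present in the original as well.
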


\begin{proof}
From the continuous inf-sup condition:
\[
\inf_{u \in U \setminus \{0\}} \sup_{v \in V \setminus \{0\}} \frac{|b(u, v)|}{\|u\|_U \|v\|_{V}} \geq \mu > 0,
\]
and for the solution \( u \in U \), this implies:
\[
\mu \|u\|_U \leq \sup_{v \in V \setminus \{0\}} \frac{|b(u, v)|}{\|v\|_{V}} = \|f\|_{V^*},
\]
hence the stability estimate \eqref{eq:stab_cont} follows. Existence and uniqueness are guaranteed by the Banach-Ne\v{c}as-Babu\v{s}ka theorem.

Next, we assume that a Fortin operator exists and let \( \tilde{\mu} \) be the discrete inf-sup constant. To bound the discrete norm, we decompose the residual as follows:
\[
\| u_h \|_{U} \leq \frac{1}{\tilde{\mu}} \left( \sup_{v_\theta \in \mathcal{M}_n \setminus \{0\}} \frac{ \langle B u_h - f, v_\theta \rangle }{ \| v_\theta \|_{V
} } + \sup_{v_\theta \in \mathcal{M}_n \setminus \{0\}} \frac{ \langle f, v_\theta \rangle }{ \| v_\theta \|_{V
} } \right).
\]
The first term is minimized by $u_h$ among all $w_h$ in $U_h,$ in particular $w_h = 0,$ in other words:
\[
\sup_{v_\theta \in \mathcal{M}_n \setminus \{0\}} \frac{ \langle Bu_h - f, v_\theta \rangle }{ \| v_\theta \|_{V
} }   \leq  \sup_{v_\theta \in \mathcal{M}_n \setminus \{0\}} \frac{ \langle - f, v_\theta \rangle }{ \| v_\theta \|_{V
} } = \| f \|_{\mathcal{M}_n^*}.
\]
Hence, the resulting bound becomes
\[
\| u_h \|_U \leq \frac{2}{\tilde{\mu}} \| f \|_{\mathcal{M}_n^*}.
\]

\end{proof}
\subsection{Error Estimates}
\label{sec:chapter7:error-estimates}

We now establish a priori and a posteriori error estimates for the hybrid MinRes method. In this framework, the trial space \( U_h \)
is discretized via FEM, and the test space \( \mathcal{M}_n \)
is discretized using a neural network.


\begin{theorem}[A Priori Error Estimate]
Let \( U_h \subset U \) be a subspace of a Hilbert space, and let \( \mathcal{M}_n \subset V \) be a neural-network-parameterized set. Suppose the bilinear form \( b(\cdot, \cdot) \), or equivalently the operator \( B : U \to \mathcal{M}_n^* \), is bounded with norm \( \|B\| \), and satisfies the inf-sup condition with constant \( \mu > 0 \). Assume a Fortin operator (Definition~\ref{def: fortin operator}) exists. Then, for all \( u \in U \) and \( u_h \in U_h \), the following a priori error estimate holds:
\[
\| u - u_h \|_U \leq \left(1 + \frac{2 C_\Pi \| B \|}{\mu} \right) \inf_{w_h \in U_h} \| u - w_h \|_U.
\]
\end{theorem}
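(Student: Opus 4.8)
The plan is to follow the classical Céa-type argument adapted to the minimal-residual / mixed setting. First I would fix an arbitrary $w_h \in U_h$ and split the error by the triangle inequality:
\[
\| u - u_h \|_U \le \| u - w_h \|_U + \| w_h - u_h \|_U,
\]
so that it suffices to bound $\| w_h - u_h \|_U$ by a constant times $\| u - w_h \|_U$. Since $w_h - u_h \in U_h$, the discrete inf-sup condition \eqref{eq:disc_infsup} (valid because a Fortin operator exists, with constant $\mu/C_\Pi$) gives
\[
\frac{\mu}{C_\Pi}\, \| w_h - u_h \|_U \le \sup_{v_n \in \mathcal{M}_n \setminus \{0\}} \frac{b(w_h - u_h, v_n)}{\|v_n\|_V}.
\]
The next step is to rewrite the numerator $b(w_h - u_h, v_n) = b(w_h - u, v_n) + b(u - u_h, v_n)$. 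The first piece is controlled by continuity, $|b(w_h - u, v_n)| \le \|B\|\, \|u - w_h\|_U \|v_n\|_V$. For the second piece I would use the Fortin orthogonality to relate $b(u-u_h,v_n)$ to the residual: since $v_n \in \mathcal{M}_n$, and using the saddle-point relation (first line of \eqref{saddlproblem7}) one has $b(u_h, v_n) = f(v_n) - (r, v_n)_V$, while $b(u, v_n) = f(v_n)$ by the (Fortin-consistent) variational equation, so $b(u - u_h, v_n) = (r, v_n)_V \le \|r\|_V \|v_n\|_V$; then $\|r\|_V$ is itself bounded via the stability estimate of Theorem~\ref{thm:wellposedness} by $\|Bu_h - f\|_{\mathcal{M}_n^*} \le \|f\|_{\mathcal{M}_n^*}$ and ultimately, by a further application of continuity and the fact that $u_h$ minimizes the residual, by $C\,\|B\|\,\|u - w_h\|_U$. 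Collecting terms and dividing by $\|v_n\|_V$ yields $\| w_h - u_h \|_U \le \frac{C_\Pi}{\mu}\big(\|B\| + (\text{residual bound})\big)\|u - w_h\|_U$, and combined with the triangle inequality this produces the stated constant $1 + \tfrac{2C_\Pi \|B\|}{\mu}$, the factor $2$ tracing back to the stability bound \eqref{eq:stab_cont2}.

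The main obstacle is handling the residual term $b(u - u_h, v_n)$ cleanly: one must be careful that the continuous solution $u$ satisfies the variational equation tested against $\mathcal{M}_n \subset V$ (so that $b(u,v_n) = f(v_n)$ there, which is immediate since $\mathcal{M}_n \subset V$), and one must invoke the right characterization of $u_h$ — namely that it minimizes the $\mathcal{M}_n^*$-dual norm of the residual, equivalently solves the discrete mixed problem — to bound $\|Bu_h - f\|_{\mathcal{M}_n^*}$ by $\|B(u - w_h)\|_{\mathcal{M}_n^*} \le \|B\|\,\|u - w_h\|_U$ (using that $w_h$ is an admissible competitor and $Bu - f$ annihilates $\mathcal{M}_n$). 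Threading the constant $2$ through this step, exactly as in the proof of Theorem~\ref{thm:wellposedness}, is the bookkeeping that needs care; everything else is the standard triangle-inequality-plus-inf-sup pattern.
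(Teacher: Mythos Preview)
Your overall strategy---triangle inequality, discrete inf-sup via the Fortin operator, then add and subtract $u$ in the bilinear form---is exactly the paper's argument. However, you overcomplicate the treatment of the second piece $b(u-u_h,v_n)$. There is no need to pass through the saddle-point residual $r$ or to invoke the stability bound~\eqref{eq:stab_cont2} from Theorem~\ref{thm:wellposedness}. The paper proceeds directly: since $Bu=f$ in $V^*$ (hence in $\mathcal{M}_n^*$), one has $\sup_{v_n}\frac{\langle B(u-u_h),v_n\rangle}{\|v_n\|_V}=\|B(u-u_h)\|_{\mathcal{M}_n^*}=\|f-Bu_h\|_{\mathcal{M}_n^*}$, and the minimality of $u_h$ in~\eqref{dis: full dual norm} immediately gives $\|B(u-u_h)\|_{\mathcal{M}_n^*}\le\|B(u-w_h)\|_{\mathcal{M}_n^*}$ for any competitor $w_h$. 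You do identify this route in your final paragraph, and that is the one to take.

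The factor $2$ therefore does \emph{not} trace back to Theorem~\ref{thm:wellposedness}; it arises simply because both terms in the decomposition $\langle B(w_h-u),v_n\rangle+\langle B(u-u_h),v_n\rangle$ are bounded by $\|B(u-w_h)\|_{\mathcal{M}_n^*}$, giving $\|w_h-u_h\|_U\le\frac{2C_\Pi}{\mu}\|B(u-w_h)\|_{\mathcal{M}_n^*}\le\frac{2C_\Pi\|B\|}{\mu}\|u-w_h\|_U$. Your detour through $\|r\|_V$ and the stability estimate is not wrong in spirit, but it muddies the bookkeeping (and risks confusion between the full-$V$ residual of~\eqref{saddlproblem7} and the $\mathcal{M}_n$-residual relevant here).
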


\begin{proof}
Let \( w_h \in U_h \) be arbitrary. Then by the triangle inequality:
\[
\| u - u_h \|_U \leq \| u - w_h \|_U + \| w_h - u_h \|_U.
\]
We now bound the second term using the discrete inf-sup condition~\eqref{eq:disc_infsup}:
\[
\| w_h - u_h \|_U \leq \frac{C_\Pi}{\mu} \sup_{v_\theta \in \mathcal{M}_n \setminus \{0\}} \frac{\langle B(w_h - u_h), v_\theta \rangle}{\| v_\theta \|_{\mathcal{M}_n}}.
\]
We add and subtract \( u \) inside the operator:
\[
\langle B(w_h - u_h), v_\theta \rangle = \langle B(w_h - u), v_\theta \rangle + \langle B(u - u_h), v_\theta \rangle.
\]
Taking the supremum and applying the triangle inequality:
\[
\sup_{v_\theta} \frac{\langle B(w_h - u_h), v_\theta \rangle}{\| v_\theta \|} 
\leq \sup_{v_\theta} \frac{\langle B(w_h - u), v_\theta \rangle}{\| v_\theta \|} + \sup_{v_\theta} \frac{\langle B(u - u_h), v_\theta \rangle}{\| v_\theta \|}.
\]
Using that \( u_h \) minimizes the residual \( \| B(u - \cdot) \|_{\mathcal{M}_n^*} \), we get:
\[
\| B(u - u_h) \|_{\mathcal{M}_n^*} \leq \| B(u - w_h) \|_{\mathcal{M}_n^*}.
\]
Hence,
\[
\| w_h - u_h \|_U \leq \frac{2 C_\Pi}{\mu} \| B(u - w_h) \|_{\mathcal{M}_n^*} \leq \frac{2 C_\Pi \| B \|}{\mu} \| u - w_h \|_U.
\]
Combining with the triangle inequality:
\[
\| u - u_h \|_U \leq \| u - w_h \|_U + \frac{2 C_\Pi \| B \|}{\mu} \| u - w_h \|_U = \left(1 + \frac{2 C_\Pi\| B \|}{\mu} \right) \| u - w_h \|_U.
\]
Taking the infimum over \( w_h \in U_h \) completes the proof.
\end{proof}

To support the forthcoming a posteriori estimate, we first establish a preliminary result. The neural-network-parameterized residual approximation \( r_n \in \mathcal{M}_n \) satisfies a quasi-optimality property with respect to the exact residual \( \bar{r} \).

\begin{proposition}[Quasi-Optimal Residual Approximation]
Let \( \bar{r} = R_V^{-1}(f - B u_h) \in V \) be the exact residual, and let \( \mathcal{M}_n \subset V \) be a set of functions parameterized by neural networks, i.e., \( v_\theta \in \mathcal{M}_n \). Define the residual approximation by
\[
r_n = \arg\min_{v_\theta \in \mathcal{M}_n} \left( \frac{1}{2} \|v_\theta\|_V^2 - \langle f - B u_h, v_\theta \rangle \right).
\]
Then \( r_n \) satisfies the quasi-optimality bound:
\[
\| \bar{r} - r_n \|_V \leq \inf_{\theta} \| \bar{r} - v_\theta \|_V.
\]
\end{proposition}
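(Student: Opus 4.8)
The plan is to recognize the minimization problem defining $r_n$ as the restriction to $\mathcal{M}_n$ of a quadratic functional whose unconstrained minimizer over all of $V$ is precisely $\bar r$. Indeed, consider the functional
\[
E(v) := \tfrac12 \|v\|_V^2 - \langle f - B u_h, v \rangle.
\]
Since $\bar r = R_V^{-1}(f - B u_h)$ is the Riesz representative, we have $\langle f - B u_h, v\rangle = (\bar r, v)_V$ for all $v \in V$, so a direct completion-of-the-square computation gives
\[
E(v) = \tfrac12 \|v\|_V^2 - (\bar r, v)_V = \tfrac12 \|v - \bar r\|_V^2 - \tfrac12 \|\bar r\|_V^2.
\]
Thus minimizing $E$ over $\mathcal{M}_n$ is exactly the same as minimizing $\tfrac12\|v - \bar r\|_V^2$ over $\mathcal{M}_n$, i.e., finding the best approximation to $\bar r$ from the set $\mathcal{M}_n$ in the $V$-norm.

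From here the conclusion is immediate: $r_n$ is (by definition) a minimizer of $v \mapsto \|v - \bar r\|_V$ over $\mathcal{M}_n = \{ v_\theta : \theta \}$, hence for every parameter value $\theta$ we have $\|r_n - \bar r\|_V \le \|v_\theta - \bar r\|_V$, and taking the infimum over $\theta$ yields
\[
\|\bar r - r_n\|_V \le \inf_\theta \|\bar r - v_\theta\|_V.
\]
In fact equality holds, since $r_n$ itself corresponds to some admissible parameter, but the stated inequality is all that is claimed.

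The main subtlety — really the only one worth flagging — is existence of the minimizer $r_n$: the set $\mathcal{M}_n$ of neural network functions is generally \emph{not} a linear subspace and need not be closed in $V$, so the argument implicitly assumes that the $\arg\min$ is attained (or one should interpret the statement with an infimizing sequence, in which case the bound is trivially true by definition of the infimum). I would add a sentence noting that if $\mathcal{M}_n$ is taken to be closed (e.g., the image of a compact parameter set under a continuous parameterization), the minimizer exists; otherwise the inequality should be read in the limiting sense. No genuine obstacle remains — the identity $E(v) = \tfrac12\|v-\bar r\|_V^2 - \tfrac12\|\bar r\|_V^2$ does all the work, reducing a Deep-Ritz-style minimization to a best-approximation statement.
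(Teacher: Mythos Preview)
Your proof is correct and follows essentially the same approach as the paper: both use the Riesz identity $\langle f - B u_h, v\rangle = (\bar r, v)_V$ to complete the square and rewrite the functional as $\tfrac12\|v-\bar r\|_V^2 - \tfrac12\|\bar r\|_V^2$, reducing the problem to best approximation in $\mathcal{M}_n$. Your additional remark on existence of the minimizer (non-closedness of $\mathcal{M}_n$) is a fair caveat that the paper does not address, but it does not affect the argument itself.
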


\begin{proof}
Since \( \bar{r} = R_V^{-1}(f - B u_h) \), we have \( \langle f - B u_h, v_\theta \rangle = (\bar{r}, v_\theta)_V \). The objective becomes:
\[
\frac{1}{2} \|v_\theta\|_V^2 - (\bar{r}, v_\theta)_V.
\]
Expanding:
\[
\frac{1}{2} \|v_\theta\|_V^2 - (\bar{r}, v_\theta)_V = \frac{1}{2} \| \bar{r} - v_\theta \|_V^2 - \frac{1}{2} \| \bar{r} \|_V^2.
\]
Since \( \| \bar{r} \|_V^2 \) is constant, minimizing the functional is equivalent to minimizing \( \| \bar{r} - v_\theta \|_V^2 \) over \( \theta \). Therefore, \( r_n \) is the best approximation to \( \bar{r} \) in \( \mathcal{M}_n \), and
\[
\| \bar{r} - r_n \|_V = \inf_{ \theta } \| \bar{r} - v_\theta \|_V.
\]
\end{proof}

This quasi-optimality result allows us to bound the residual approximation error in terms of the best neural-network approximation, which we now use in the a posteriori error analysis.

\begin{theorem}[A Posteriori Error Estimate]
Let \( (u_h, r_n) \in U_h \times \mathcal{M}_n \) be the solution to the mixed minimal-residual problem~\eqref{saddlproblem7}. Then:
\[
\| u - u_h \|_{U} \leq \frac{1}{\mu} \left( \| r_n\|_{V} +  \inf_{\theta} \| \bar{r} - v_\theta\|_{V} \right),
\]
where \( \mu > 0 \) is the inf-sup constant, \( \bar{r} \in V \) is the exact residual.
\end{theorem}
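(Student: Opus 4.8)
The plan is to reduce $\|u-u_h\|_U$ to the norm of the exact residual $\bar r = R_V^{-1}(f-Bu_h)$ by means of the continuous inf--sup condition, and then to split $\|\bar r\|_V$ using the computed neural-network residual $r_n$ together with the quasi-optimality property established just above.

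First I would invoke the continuous inf--sup condition \eqref{eq:inf-sup}, applied to $u-u_h \in U$, to obtain
\[
\mu\,\|u-u_h\|_U \;\le\; \sup_{v\in V\setminus\{0\}} \frac{|b(u-u_h,v)|}{\|v\|_V}.
\]
Since $u$ solves the continuous problem \eqref{eq:abs_var_form}, we have $b(u,v)=f(v)$ for every $v\in V$, hence $b(u-u_h,v) = f(v)-b(u_h,v) = \langle f-Bu_h, v\rangle_{V^*,V}$. By the definition of the exact residual, $\bar r = R_V^{-1}(f-Bu_h)$, this duality pairing equals the inner product $(\bar r, v)_V$, so that by Cauchy--Schwarz (with equality attained at $v=\bar r$),
\[
\sup_{v\in V\setminus\{0\}} \frac{|b(u-u_h,v)|}{\|v\|_V} = \sup_{v\in V\setminus\{0\}} \frac{|(\bar r,v)_V|}{\|v\|_V} = \|\bar r\|_V,
\]
which yields $\mu\,\|u-u_h\|_U \le \|\bar r\|_V$.

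Next I would bring in the computed residual $r_n\in\mathcal{M}_n$ through a triangle inequality, $\|\bar r\|_V \le \|r_n\|_V + \|\bar r - r_n\|_V$, and then apply the preceding Quasi-Optimal Residual Approximation proposition, which identifies $\|\bar r - r_n\|_V$ \emph{exactly} with the best-approximation error $\inf_\theta \|\bar r - v_\theta\|_V$ of $\bar r$ over the neural-network set. Substituting this and dividing by $\mu$ gives the claimed bound $\|u-u_h\|_U \le \tfrac{1}{\mu}\big(\|r_n\|_V + \inf_\theta\|\bar r - v_\theta\|_V\big)$.

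I do not expect a genuine obstacle here: the argument is a short composition of the inf--sup estimate, the residual identity, and the quasi-optimality lemma. The only point requiring care is consistency of notation — one must confirm that the $r_n$ appearing in the statement is indeed the minimizer $\arg\min_{v_\theta\in\mathcal{M}_n}\big(\tfrac12\|v_\theta\|_V^2 - \langle f-Bu_h, v_\theta\rangle\big)$ of the quasi-optimality proposition (equivalently, the second component of the mixed problem \eqref{saddlproblem7} posed with test set $\mathcal{M}_n$), built from the same $u_h$ that enters $\bar r$, so that the identity $\|\bar r - r_n\|_V = \inf_\theta\|\bar r - v_\theta\|_V$ is legitimately available.
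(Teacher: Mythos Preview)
Your proposal is correct and follows essentially the same approach as the paper: inf--sup bound to reduce $\|u-u_h\|_U$ to $\|\bar r\|_V$, then a triangle inequality with the computed residual $r_n$, then the quasi-optimality proposition. Your version is in fact slightly more careful in justifying $\sup_{v}\tfrac{|b(u-u_h,v)|}{\|v\|_V}=\|\bar r\|_V$ via the Riesz identity and in flagging the consistency requirement on $r_n$.
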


\begin{proof}
From the inf-sup condition:
\[
\|u - u_h\|_{U} \leq \frac{1}{\mu} \|B(u - u_h)\|_{V^*}.
\]
By definition of the Riesz map \( R_{V} : V \to V^* \), we have:
\[
B(u - u_h) = R_{V} \bar{r} \quad \text{therefore,} \quad \|B(u - u_h)\|_{V^*} = \|r\|_{V}.
\]
Let \( r_n \in \mathcal{M}_n \) be the computed residual. Then:
\[
\|\bar{r}\|_{V} \leq \|r_n\|_{V} + \|\bar{r} - r_n\|_{V}.
\]
Now approximate \( \bar{r} \) by some \( v_\theta \in \mathcal{M}_n \). By quasi-optimality:
\[
\| \bar{r} - r_n\|_{V} \leq  \inf_{\theta} \| \bar{r} - v_\theta\|_{V}.
\]
Thus,
\[
\| u - u_h\|_{U} \leq \frac{1}{\mu} \left( \| r_n\|_{V} +  \inf_{\theta} \| \bar{r} - v_\theta\|_{V} \right).
\]
\end{proof}

\section{Implementation for Weak Advection-Reaction}
\label{sec4:chapter7}

In this section, we describe the practical implementation of the hybrid minimal-residual method. We begin by specifying the model advection-reaction problem, then introduce the shallow ReLU neural network architecture used for approximating the residual space \( \mathcal{M}_n \). Subsequently, we detail the numerical algorithm, the network training procedure, and the formulation of the coupled linear systems solved in each iteration.

\subsection{Model Problem}
\begin{example}\label{adv_reac_example}
Consider the linear advection-reaction equation:
\begin{equation}\label{model:chp7}
    \beta \cdot \nabla u + \gamma u = f \quad \text{in } \Omega, \qquad
    u = u_{\text{in}} \quad \text{on } \Gamma_{\text{in}},
\end{equation}
where \( \Gamma_{\text{in}} \) denotes the inflow boundary, and \( \beta > 0 \), \( \gamma \geq 0 \) are scalar parameters.
\end{example}

We now present a suitable weak formulation for~\eqref{model:chp7} that fits the abstract problem. 
Let \( U := L^2(\Omega) \) and let \(V := H_{0, \Gamma_{\text{out}}}^1(\beta; \Omega),\)
\text{where} 
\[
H_{0, \Gamma_{\text{out}}}^1(\beta; \Omega) := \left\{ w \in L^2(\Omega) \,\middle|\, \beta \cdot \nabla w \in L^2(\Omega),\; w|_{\Gamma_{\text{out}}} = 0 \right\},
\]
equipped with the norm
\[
\|w\|_V^2 = \|w\|_{L^2(\Omega)}^2 + \|\beta \cdot \nabla w\|_{L^2(\Omega)}^2.
\]

The weak formulation of~\eqref{model:chp7} is then: find \( u \in U \) such that
\begin{equation}\label{eq:weak-form2}
b(u, v) = \ell(v) \qquad \forall \, v \in V,
\end{equation}
where for all \( w \in U \), the bilinear and linear forms are defined as:

\begin{align}
b(w, v) &:= \int_{\Omega} w \left( \gamma v - \nabla \cdot (\beta v) \right), \\
\ell(w) &:= \int_{\Omega} f\, w.
\end{align}

\subsection{One-Dimensional Shallow ReLU Neural Network}
\label{sec:shallow-relu}

Following the approach of Liu et al.~\cite{liu2022adaptive}, we adopt a one-dimensional shallow neural network with a single hidden layer of \( n \) neurons and ReLU activation. This defines the function space:
\begin{equation} \label{eq:relu-def}
    \mathcal{M}_n = \left\{ c_0 + \sum_{i=1}^n c_i \, \text{ReLU}(b_i - \omega_i x) : b_i, \omega_i, c_i \in \mathbb{R} \right\},
\end{equation}
where \( c_0 \) and \( c_i \) are output layer weights, and \( b_i \), \( \omega_i \) are hidden layer parameters. The ReLU activation is given by:
\[
    \text{ReLU}(x) = \max\{0, x\} = \begin{cases} 0, & x < 0, \\ x, & x \geq 0. \end{cases}
\]

By fixing \( \omega_i = 1 \) for all \( i \) and setting \( b_i \in (a, b] \), we reduce the number of nonlinear parameters and define:
\begin{equation} \label{eq:simplified-Mn}
    \mathcal{M}_n = \left\{ c_0 + \sum_{i=1}^{n-1} c_i \, \text{ReLU}(b_i - x) : b_i \in (a, b], \, c_i \in \mathbb{R} \right\}.
\end{equation}

This space is equivalent to the classical space of linear splines with free interior knots~\cite{schumaker2007spline}.

Define global basis functions \( \phi_i(x) := \text{ReLU}(b_i - x) \), leading to the representation:
\begin{equation} \label{eq:global-basis-representation}
    v(x) = \sum_{i=0}^n c_i \, \phi_i(x).
\end{equation}

\subsection{Deep Residual Uzawa Algorithm}

The following algorithm is used to compute the approximate solution \( u_h \) and residual \( r_n \).

\begin{algorithm}[H]
\caption{Deep Residual Uzawa Algorithm}
\begin{algorithmic}[1]
\STATE{\textbf{Initialize:} Choose initial guess \( u_h^0 \in U_h \)}
\WHILE{not converged}
    \STATE{\textbf{Residual Minimization:} Solve for \( r_n^k \in \mathcal{M}_n \):
    \begin{equation*}
        r_n^k = \arg\min_{v_n \in \mathcal{M}_n} \left( \frac{1}{2} \|v_n\|^2_{\mathcal{M}} - \langle f - B u_h^k, v_n \rangle \right)
    \end{equation*}}

    \STATE{\textbf{Primal Update:} Solve for \( u_h^{k+1} \in U_h \):
    \begin{equation*}
        \langle u_h^{k+1}, w_h \rangle = \langle u_h^k, w_h \rangle + \rho \, b(w_h, r_n^k), \quad \forall\, w_h \in U_h
    \end{equation*}}

    \STATE{\textbf{Network Optimization:} Update neural network parameters \( \{c_i, b_i\} \) via gradient descent.}

    \STATE{\textbf{Convergence Check:} If \( \|u_h^{k+1} - u_h^k\| < \varepsilon \), terminate.}
\ENDWHILE
\end{algorithmic}
\end{algorithm}

\section{Numerical Experiments}
\label{sec5:chapter7}

This section presents numerical experiments validating the hybrid minimal-residual approach. We focus on one- and two-dimensional advection-reaction problems with non-smooth or singular source terms. In our experiments of this section, all neural network parameters, denoted by \(\theta = (\{c_i\}, \{b_i\})\), where \(\{c_i\}\) are the outer coefficients and \(\{b_i\}\) are the breakpoints,
were optimized using MATLAB’s \texttt{fminsearchbnd}, an extension of the Nelder-Mead simplex method with box constraints. To avoid breakpoints going outside the domain, we enforce bounds via the \texttt{LB} and \texttt{UB} options. At each Uzawa iteration, the termination tolerance was refined to enhance convergence.

\subsection{One-Dimensional Advection-Reaction Equations}
We now present several one-dimensional test cases to demonstrate the performance of the proposed NN-FEM method. Each case solves the advection-reaction problem defined in Example~\ref{adv_reac_example} with inflow boundary condition \( u_{\mathrm{in}} = 0 \), under varying advection strengths \( \beta \), reaction coefficients \( \gamma \), and source terms. For each experiment, we visualize the FEM approximation \( u_h \) and the corresponding neural residual \( r_n \).
\begin{itemize}
\item \textbf{Case 1:} \(\beta = 0.001\), \(\gamma = 1\), Discontinuous source.

Fig.~\ref{fig:nn_fem_combined_residuals} displays the numerical solutions (left column) and the learned neural residuals (right column). The figure shows the solution and residual plots for \( N = 1 , N = 2\) and \( N = 4 \), with corresponding breakpoints \( M = 4, M =6\) and \( M = 7 \), respectively. 

\item \textbf{Case 2:} \(\beta = 1\), \(\gamma = 0\), Dirac delta source at \( x = 0.5 \).

This case involves a transport-dominated regime with a singular source. The Dirac delta function is approximated numerically. Fig.~\ref{fig:dirac_combined_results1} shows the solution and residual plots for \( N = 1 \) and \( N = 3 \), with corresponding breakpoints \( M = 2 \) and \( M = 5 \), respectively. Despite the singular source, the method produces stable and accurate solutions as the resolution increases.

\item \textbf{Case 3:} \(\beta = 1\), \(\gamma = 0\), Dirac delta source at \( x = \frac{2}{3} \).

In this variation, the singular source is shifted to \( x = \frac{2}{3} \). This further tests the method’s ability to localize sharp features away from the domain center. Fig.~\ref{fig:dirac_combined_results2} presents the solution and residual for \( N = 1 \), \( N = 2 \) and \( N = 4 \). Even with coarse discretisation, the residual adapts well to the shifted source location.

\end{itemize}

\subsection{Two-Dimensional Residual Approximation Test}
\label{subsec:2d-case}

Due to the complexity of implementing a full mixed finite element formulation with neural network residuals in two dimensions, we limit our demonstration to a single illustrative case. The goal is to test whether a neural network can effectively approximate the residual function in a simplified 2D setting.

\begin{example}[2D Advection]
Consider the advection-reaction equation on the unit square \( \Omega = (0,1)^2 \) with constant vertical advection:
\[
\beta\, \partial_y u(x, y) = f(x, y), \qquad u(x, 0) = 0,
\]
where \( \beta = (0,1) \).
\end{example}

To simplify the experiment, we assume the following exact solution:
\[
u(x, y) = \begin{cases}
1, & x > 0.5, \\
0, & x \leq 0.5,
\end{cases}
\]
and a constant finite element approximation:
\[
u_h(x, y) = 0.5.
\]

\begin{figure}[htbp]
\centering
\begin{subfigure}[b]{0.45\textwidth}
    \includegraphics[width=\textwidth]{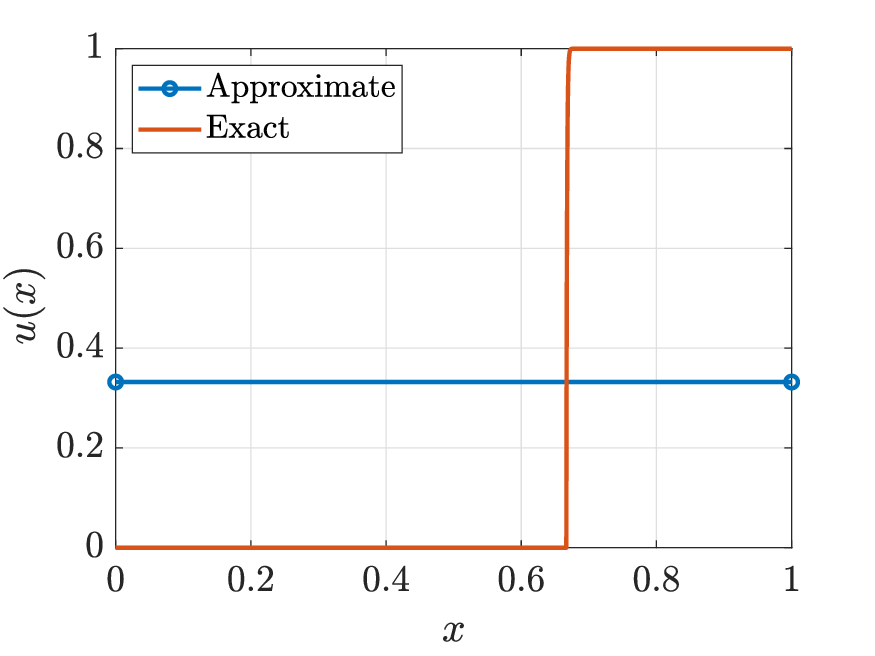}
    \caption{Solution \( u(x) \), mesh size 1.}
\end{subfigure}
\hfill
\begin{subfigure}[b]{0.45\textwidth}
    \includegraphics[width=\textwidth]{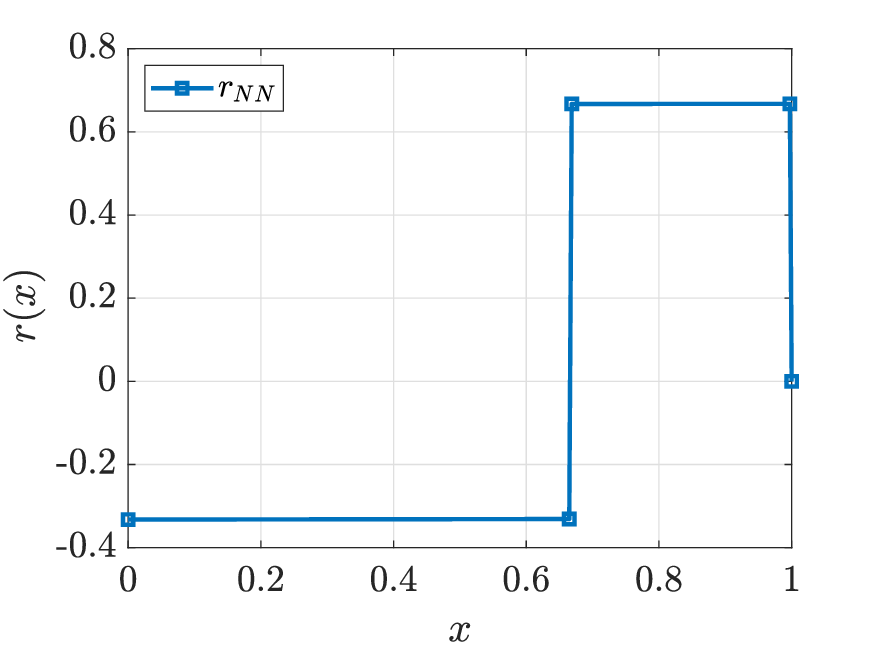}
    \caption{Residual with 4 breakpoints.}
\end{subfigure}


\begin{subfigure}[b]{0.45\textwidth}
    \includegraphics[width=\textwidth]{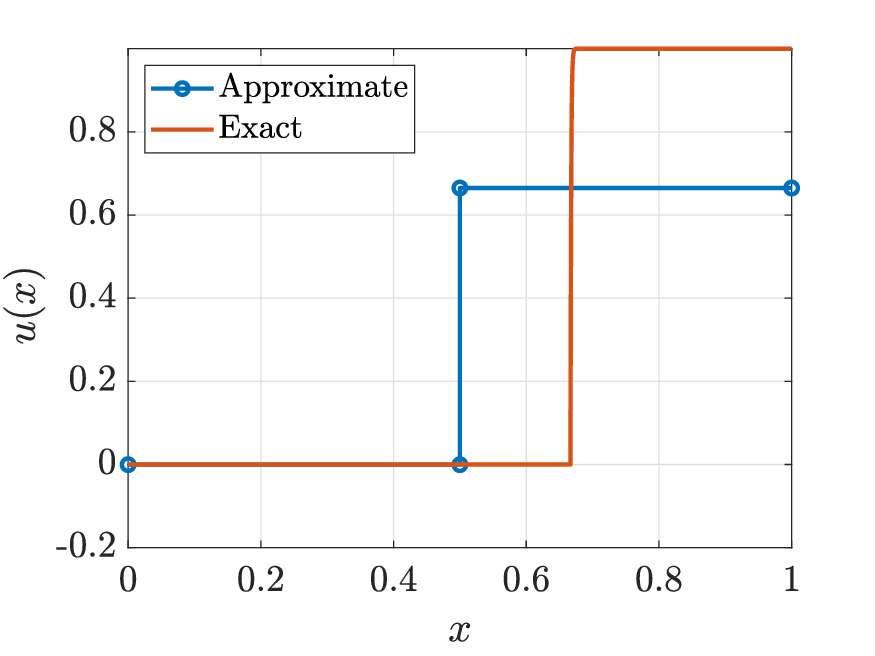}
    \caption{Solution \( u(x) \), mesh size 0.5.}
\end{subfigure}
\hfill
\begin{subfigure}[b]{0.45\textwidth}
    \includegraphics[width=\textwidth]{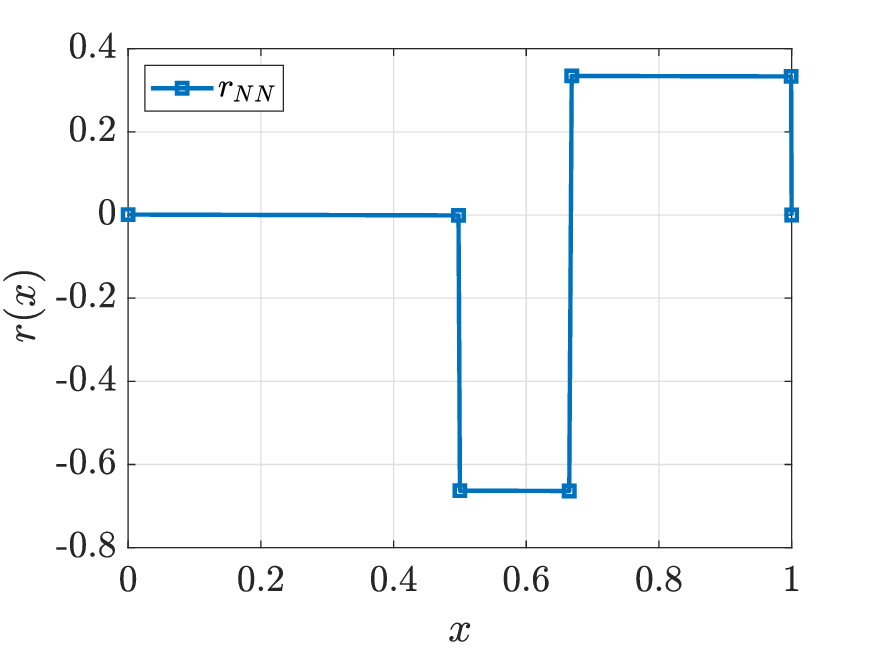}
    \caption{Residual with 6 breakpoints.}
\end{subfigure}


\begin{subfigure}[b]{0.45\textwidth}
    \includegraphics[width=\textwidth]{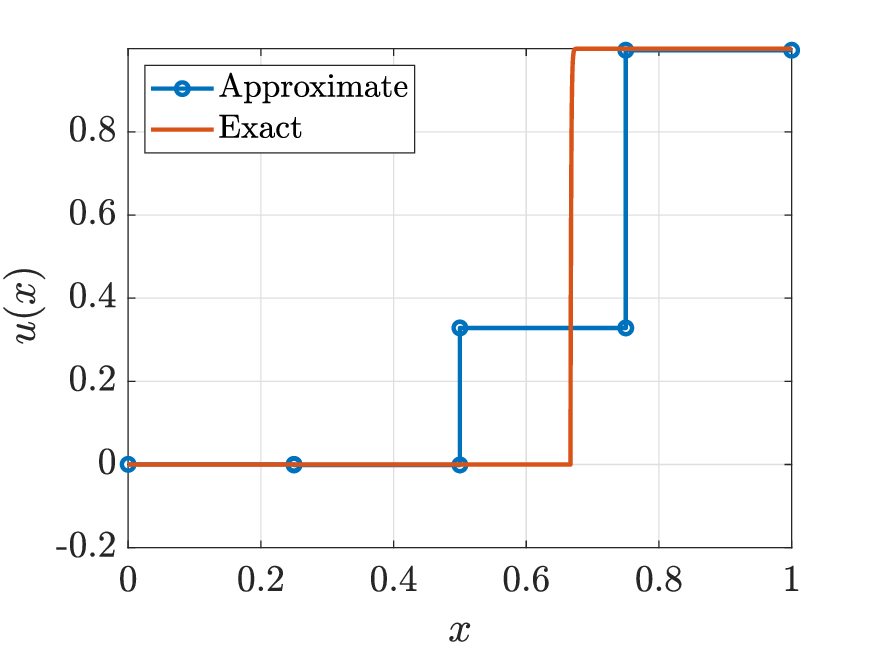}
    \caption{Solution \( u(x) \), mesh size 0.25.}
\end{subfigure}
\hfill
\begin{subfigure}[b]{0.45\textwidth}
    \includegraphics[width=\textwidth]{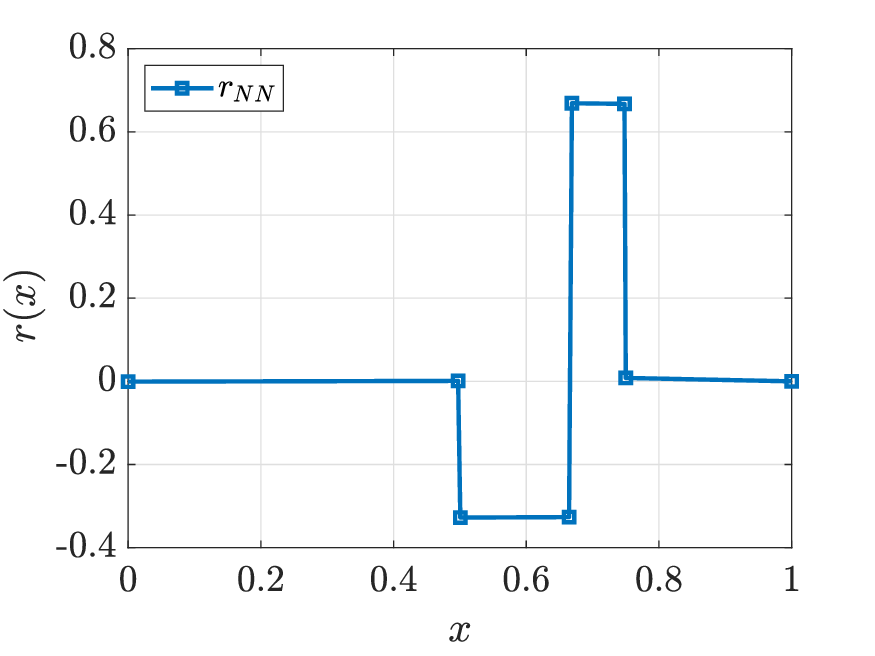}
    \caption{Residual with 7 breakpoints.}
\end{subfigure}

\caption{Left: FEM approximations \( u(x) \) for mesh sizes 1, 0.5, and 0.25. Right: Corresponding neural residuals with 4, 6, and 7 ReLU breakpoints, respectively.}
\label{fig:nn_fem_combined_residuals}
\end{figure}

\begin{figure}[htbp]
\centering
\begin{subfigure}[b]{0.45\textwidth}
    \includegraphics[width=\textwidth]{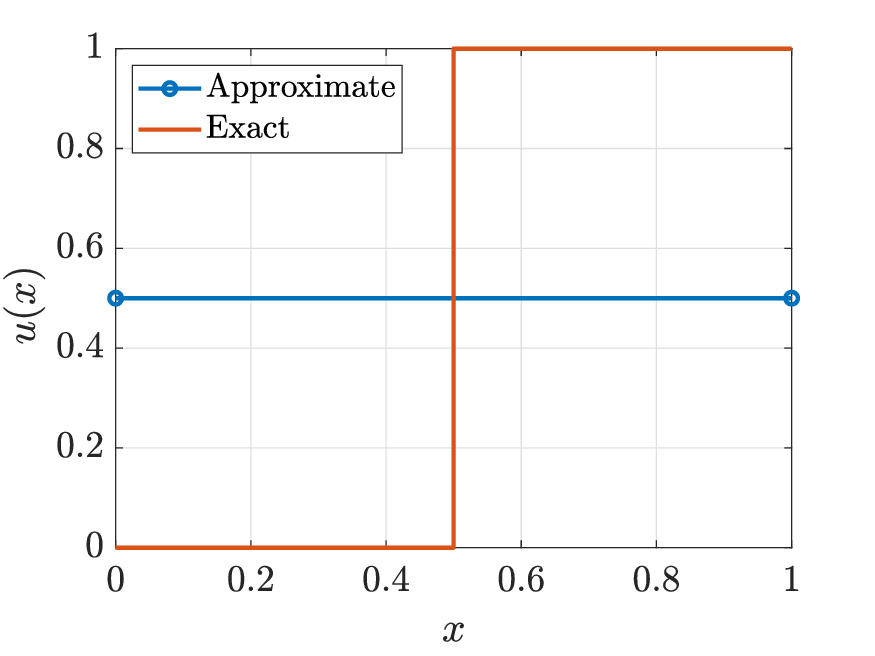}
    \caption{Solution \( u(x) \), mesh size 1.}
    \label{fig:dirac_u_mesh11}
\end{subfigure}
\hfill
\begin{subfigure}[b]{0.45\textwidth}
    \includegraphics[width=\textwidth]{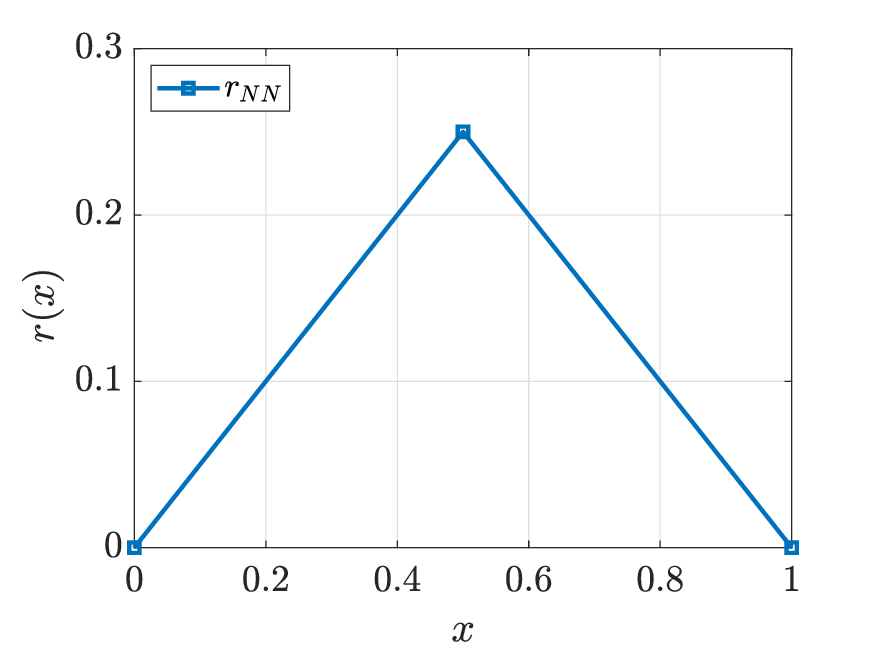}
    \caption{Residual with 2 breakpoints.}
    \label{fig:dirac_r_bp41}
\end{subfigure}


\begin{subfigure}[b]{0.45\textwidth}
    \includegraphics[width=\textwidth]{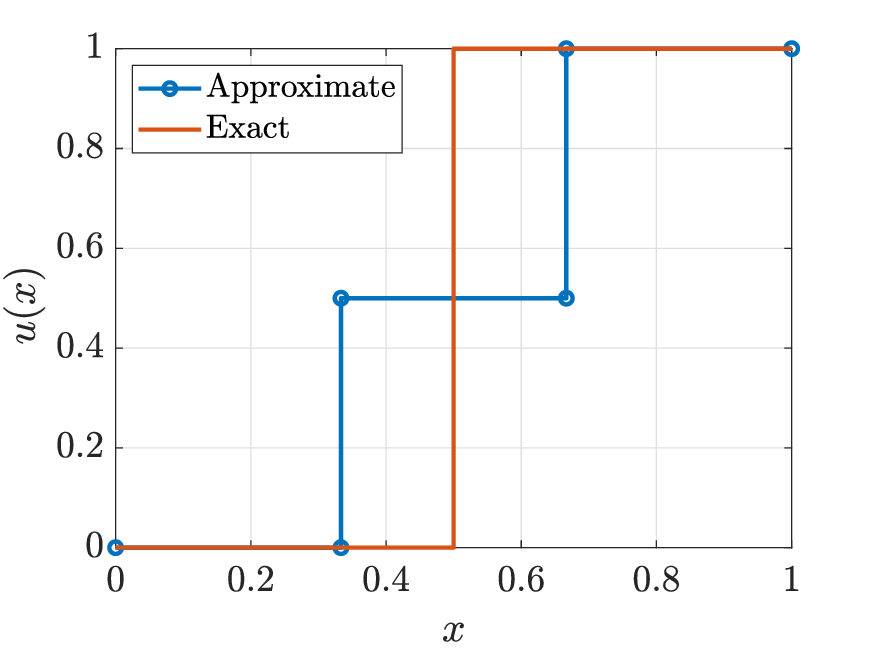}
    \caption{Solution \( u(x) \), mesh size $\frac{1}{3}$.}
    \label{fig:dirac_u_mesh41}
\end{subfigure}
\hfill
\begin{subfigure}[b]{0.45\textwidth}
    \includegraphics[width=\textwidth]{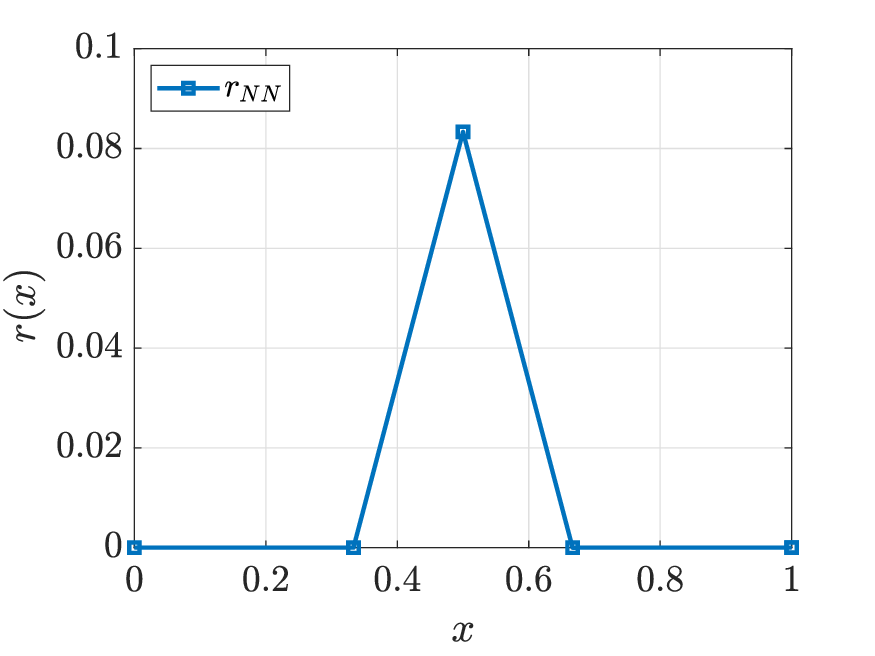}
    \caption{Residual with 5 breakpoints.}
    \label{fig:dirac_r_bp71}
\end{subfigure}

\caption{Left: FEM approximations \( u(x) \) with mesh sizes 1 and $\frac{1}{3}$. Right: Corresponding residuals using 2 and 5 ReLU breakpoints, respectively.}
\label{fig:dirac_combined_results1}
\end{figure}

\begin{figure}[htbp]
\centering
\begin{subfigure}[b]{0.45\textwidth}
    \includegraphics[width=\textwidth]{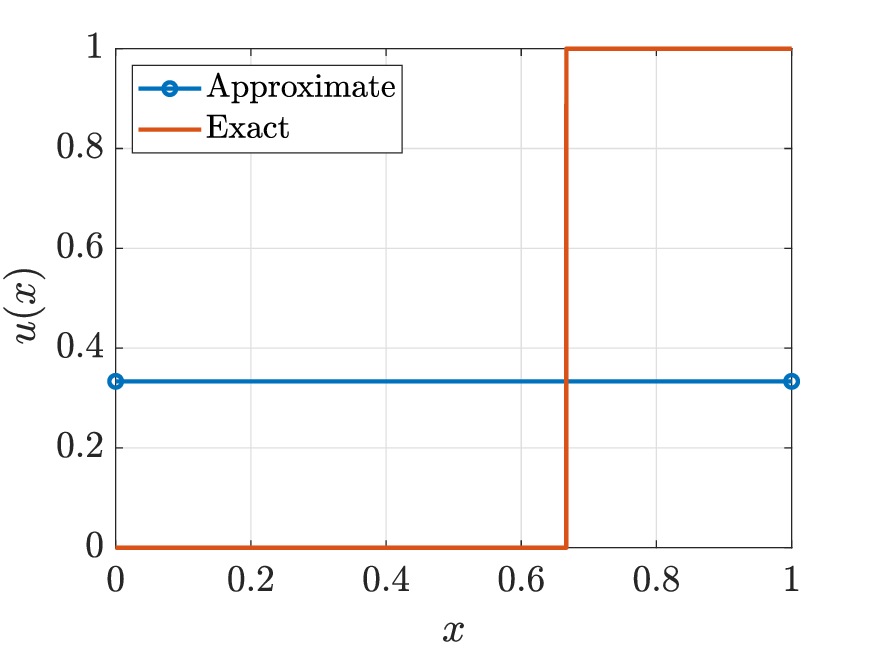}
    \caption{Solution \( u(x) \), mesh size 1.}
    \label{fig:dirac_u_mesh111}
\end{subfigure}
\hfill
\begin{subfigure}[b]{0.45\textwidth}
    \includegraphics[width=\textwidth]{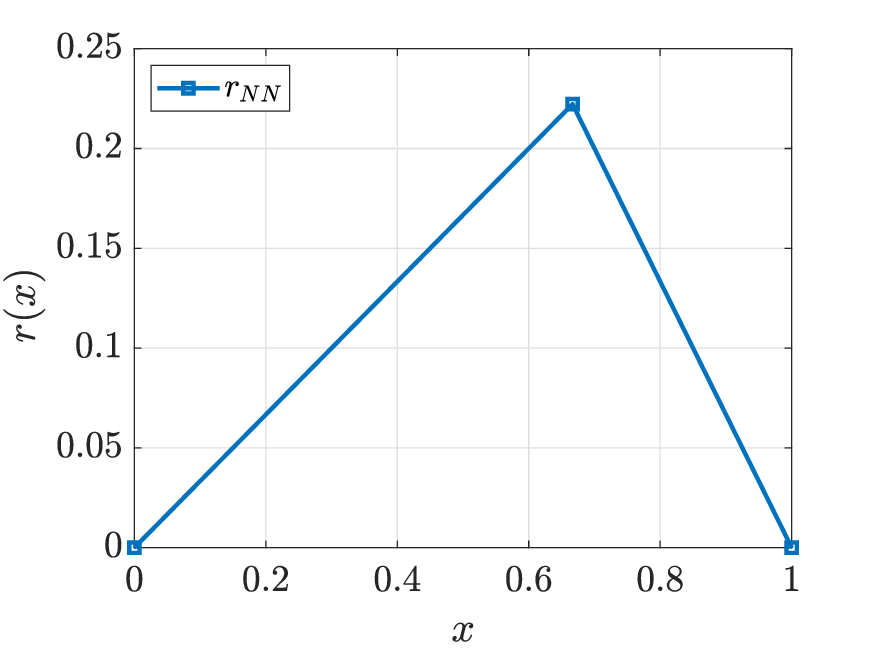}
    \caption{Residual with 2 breakpoints.}
    \label{fig:dirac_r_bp412}
\end{subfigure}


\begin{subfigure}[b]{0.45\textwidth}
    \includegraphics[width=\textwidth]{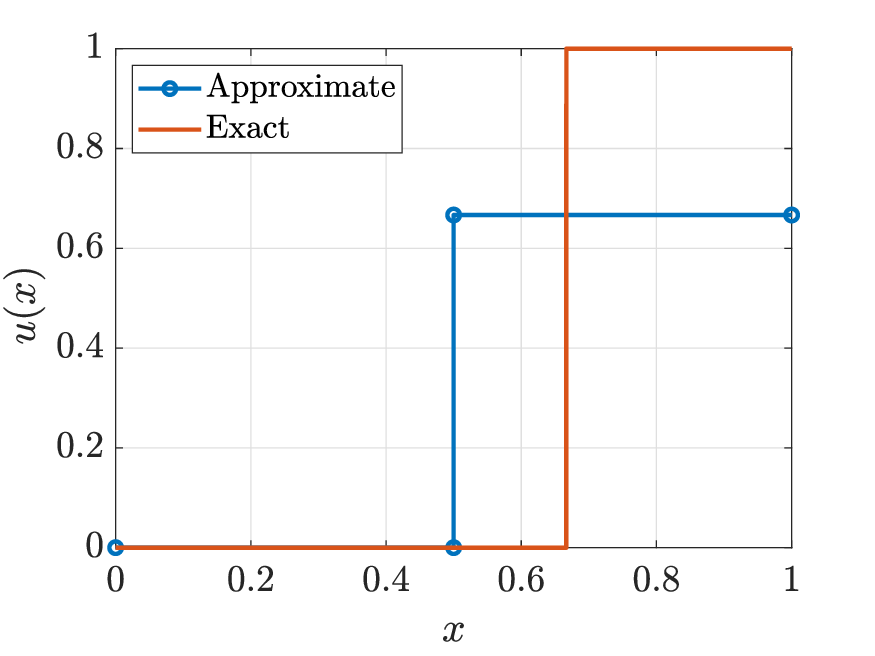}
    \caption{Solution \( u(x) \), mesh size 0.5.}
    \label{fig:dirac_u_mesh413}
\end{subfigure}
\hfill
\begin{subfigure}[b]{0.45\textwidth}
    \includegraphics[width=\textwidth]{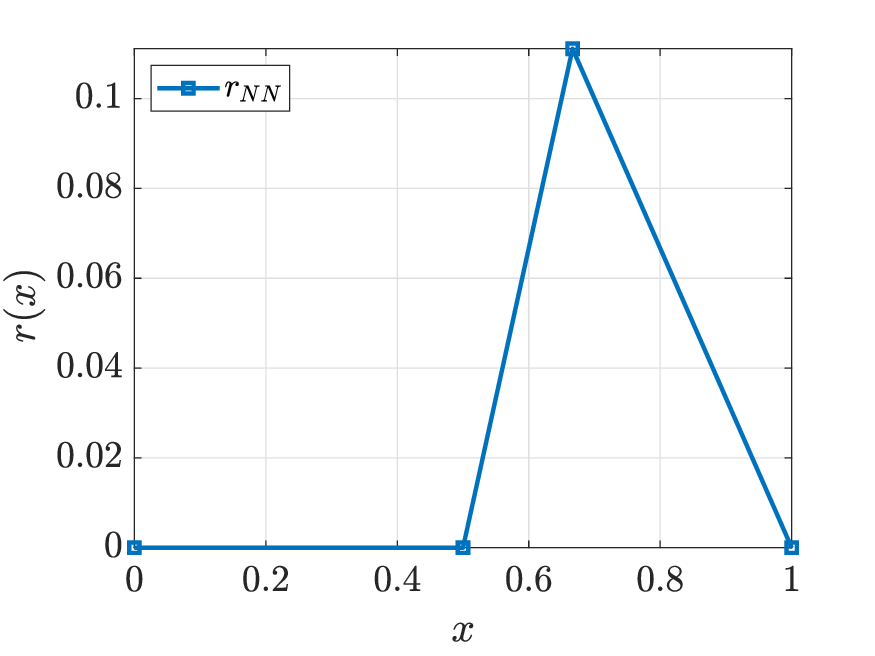}
    \caption{Residual with 3 breakpoints.}
    \label{fig:dirac_r_bp714}
\end{subfigure}


\begin{subfigure}[b]{0.45\textwidth}
    \includegraphics[width=\textwidth]{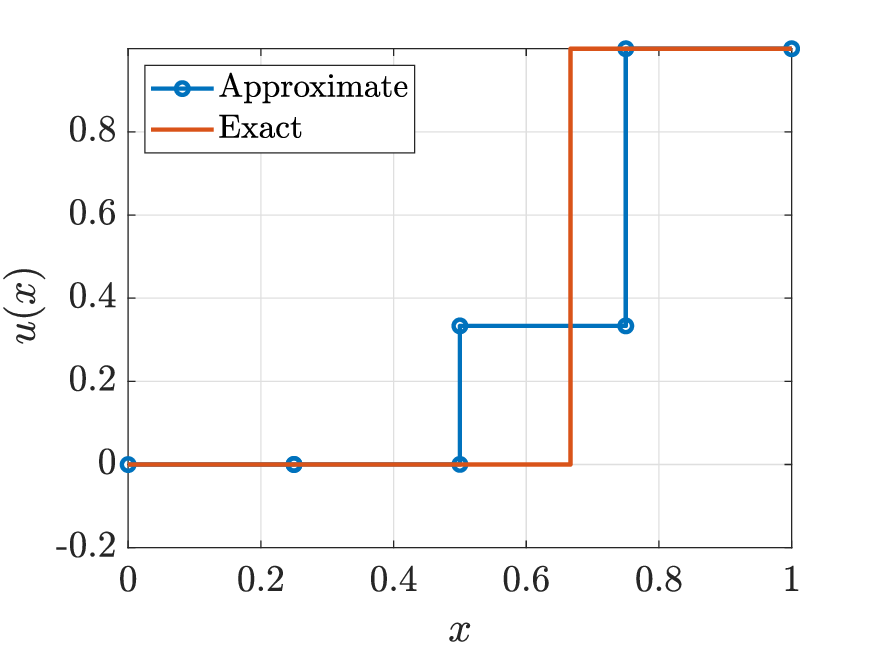}
    \caption{Solution \( u(x) \), mesh size 0.25.}
    \label{fig:dirac_u_mesh415}
\end{subfigure}
\hfill
\begin{subfigure}[b]{0.45\textwidth}
    \includegraphics[width=\textwidth]{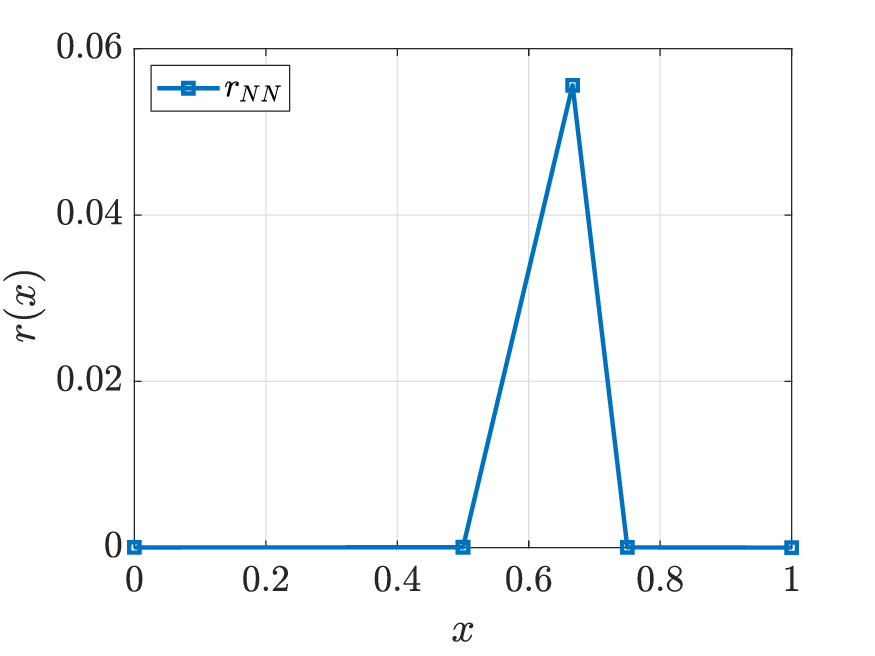}
    \caption{Residual with 4 breakpoints.}
    \label{fig:dirac_r_bp716}
\end{subfigure}

\caption{Solution and residual for Dirac source located at \( x = \frac{2}{3} \). Left: FEM approximations \( u(x) \) with mesh sizes 1, 0.5 and 0.25. Right: Corresponding residuals using 2, 3 and 4 ReLU breakpoints, respectively.}
\label{fig:dirac_combined_results2}
\end{figure}



\begin{figure}[htbp]
\centering
\begin{subfigure}[b]{0.45\textwidth}
    \includegraphics[width=\textwidth]{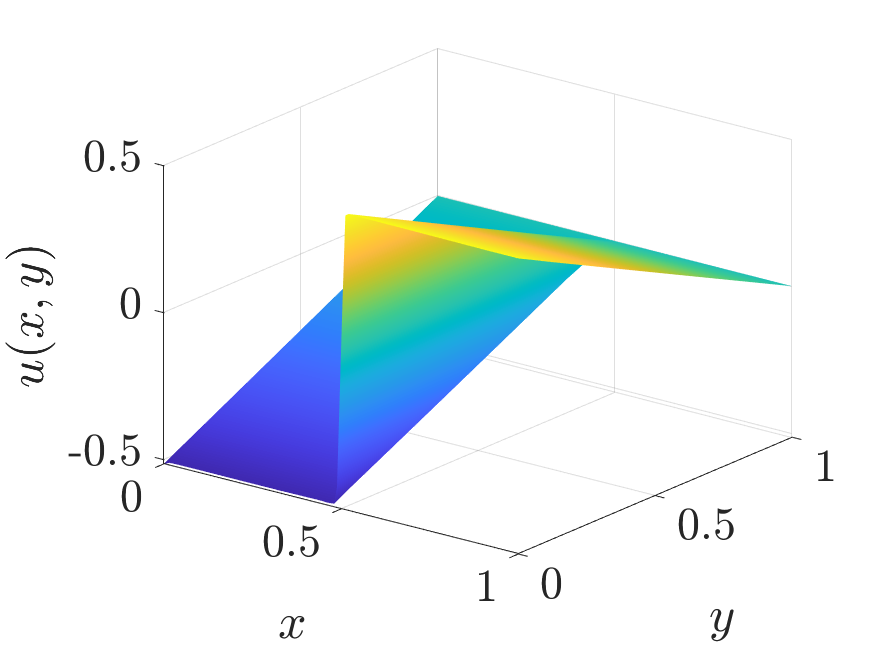}
    \caption{NN-FEM approximation.}
\end{subfigure}
\hfill
\begin{subfigure}[b]{0.45\textwidth}
    \includegraphics[width=\textwidth]{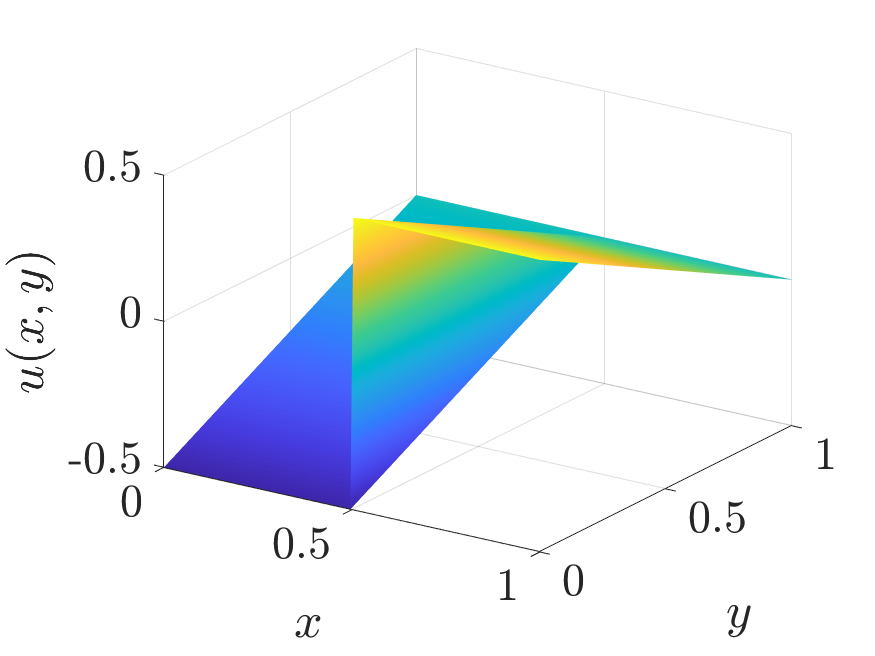}
    \caption{Reference (exact) solution.}
\end{subfigure}
\caption{Residual-informed neural FEM captures structure in 2D advection.}
\label{fig:convergence_figures1}
\end{figure}

We then define the exact residual as:
\[
r(x, y) = f(x, y) - \beta\, \partial_y u_h(x, y) = f(x, y),
\]
since \( u_h \) is constant. For the chosen \( u \), the source term \( f \) is:
\[
f(x, y) = \begin{cases}
0.5(1 - y), & x > 0.5, \\
0.5(y - 1), & x \leq 0.5,
\end{cases}.
\]

We approximate the residual \( r \) using a ReLU neural network trained with a least-squares objective over collocation points in \( \Omega \). The objective functional used is:
\[
\frac{1}{2}\|\beta \cdot \nabla r\|_{L^2(\Omega)}^2 - \int_\Omega u_h\, \beta \cdot \nabla r \, dx + \int_\Omega u\, \beta \cdot \nabla r \, dx,
\]
where \( u_h \) is the finite element approximation and \( u \) is the assumed exact solution.

Figures~\ref{fig:convergence_figures1} demonstrate that, even in two-dimensional settings, the neural-network-based residuals significantly enhance the resolution of directional flow features and sharp gradient zones. This highlights the method’s potential for robust extension to higher-dimensional PDE problems.

\subsection{Error Convergence Plots for NN-FEM}
\label{sec:nn-fem-convergence}

We conclude this section by studying the convergence behavior of the proposed minimal-residual NN-FEM approach. The goal is to quantify the approximation error for both the primal solution \( u_h \in U_h \) and the residual \( r_n \in \mathcal{M}_n \), based on the number of degrees of freedom.

Figures~\ref{fig:u_r_convergence_all} show the convergence behavior under four different advection coefficients \( \beta = 1, 0.1, 0.01, 0.001 \), representing regimes from advection-dominant to reaction-dominant. Moreover, Fig.~\ref{fig:three_valueNwithMs} illustrates the convergence behavior when the source term \( f \) is a Dirac delta function, with parameters \( \beta = 1 \) and \( \gamma = 0 \). These plots reflect convergence with respect to different breakpoints used in the residual network, demonstrating the method's robustness in the presence of singular data.

\begin{figure}[htbp]
    \centering

    \begin{subfigure}[t]{0.48\textwidth}
        \centering
        \includegraphics[width=\textwidth]{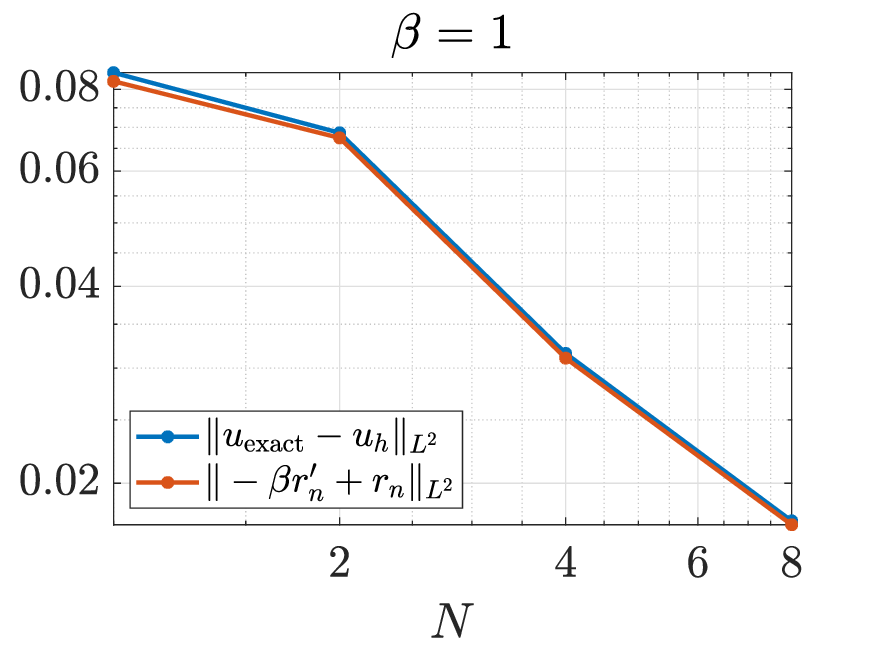}
        \caption{Convergence for \( \beta = 1 \)}
        \label{fig:error_beta1}
    \end{subfigure}
    \hfill
    \begin{subfigure}[t]{0.48\textwidth}
        \centering
        \includegraphics[width=\textwidth]{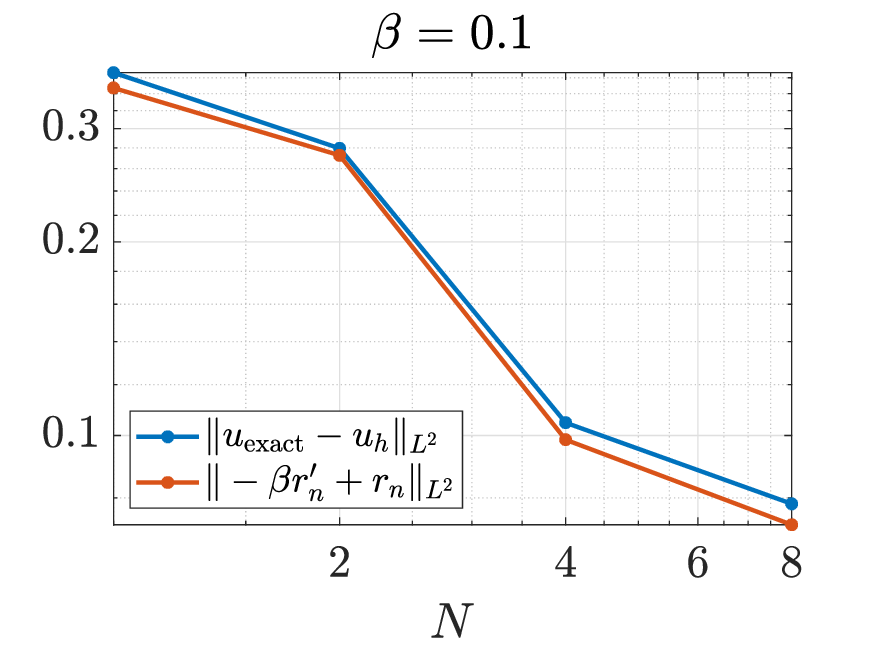}
        \caption{Convergence for \( \beta = 0.1 \)}
        \label{fig:error_beta01}
    \end{subfigure}

    \vskip\baselineskip

    \begin{subfigure}[t]{0.48\textwidth}
        \centering
        \includegraphics[width=\textwidth]{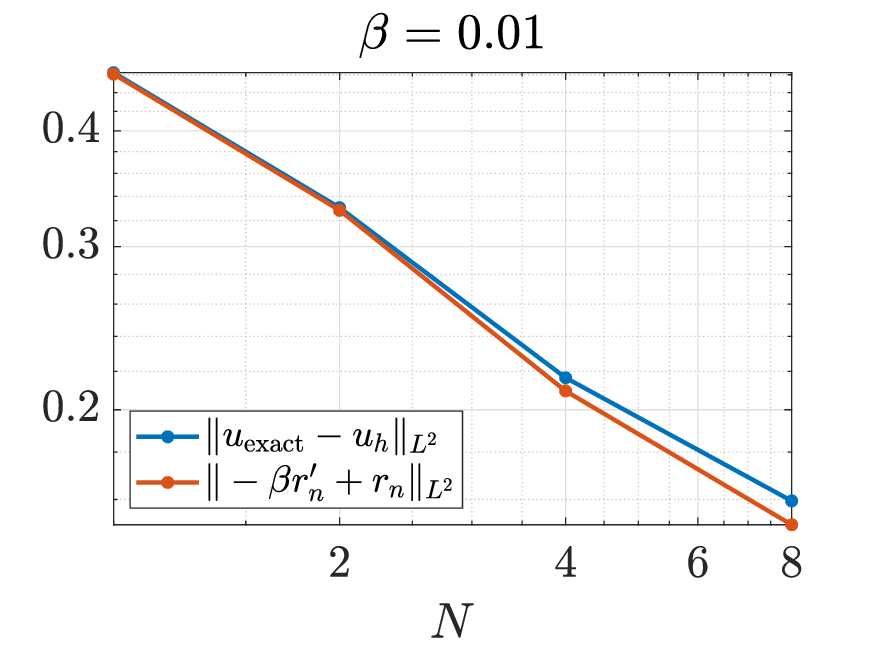}
        \caption{Convergence for \( \beta = 0.01 \)}
        \label{fig:error_beta001}
    \end{subfigure}
    \hfill
    \begin{subfigure}[t]{0.48\textwidth}
        \centering
        \includegraphics[width=\textwidth]{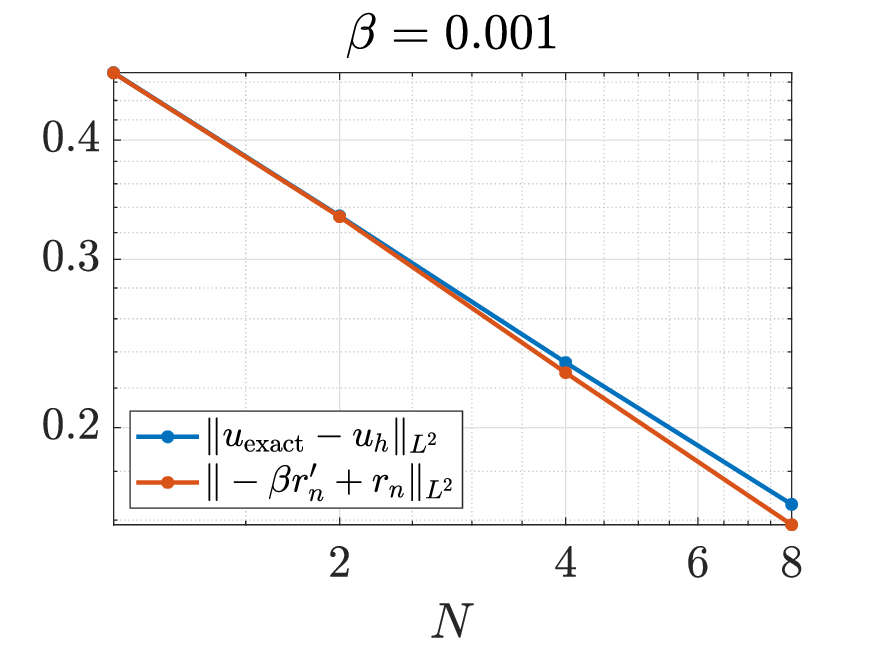}
        \caption{Convergence for \( \beta = 0.001 \)}
        \label{fig:error_beta0001}
    \end{subfigure}

    \caption{Log-log plots of \( L^2 \) errors in the solution \( u_h \) and the residual expression \( -\beta r_n' + \gamma r_n \) for decreasing values of the advection parameter \( \beta \).}
    \label{fig:u_r_convergence_all}
\end{figure}

\begin{figure}[htbp]
    \centering
    \begin{subfigure}[b]{0.32\textwidth}
        \centering
        \includegraphics[width=\textwidth]{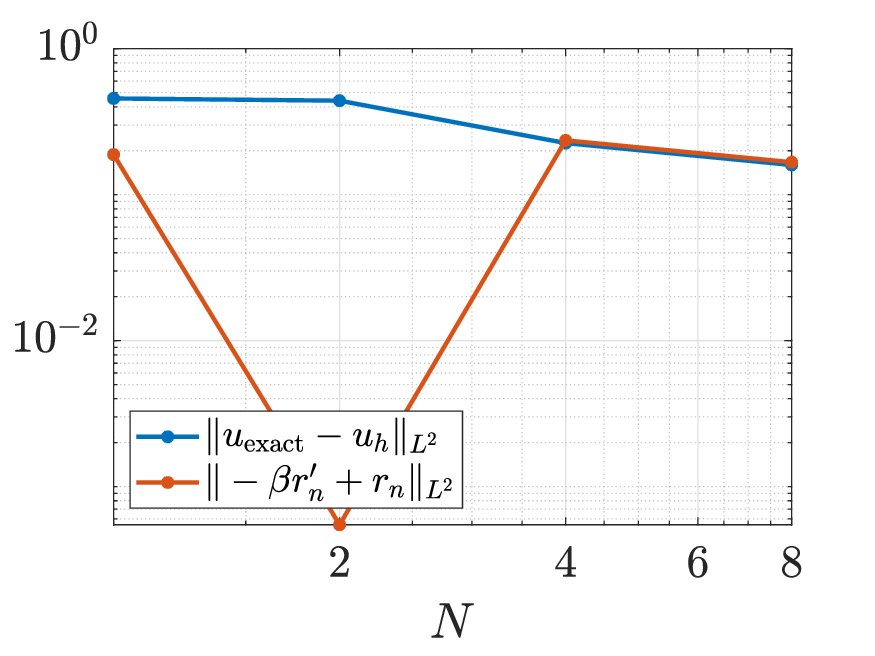}
        \caption{ $M = N $}
        \label{fig:sub1}
    \end{subfigure}
    \hfill
    \begin{subfigure}[b]{0.32\textwidth}
        \centering
        \includegraphics[width=\textwidth]{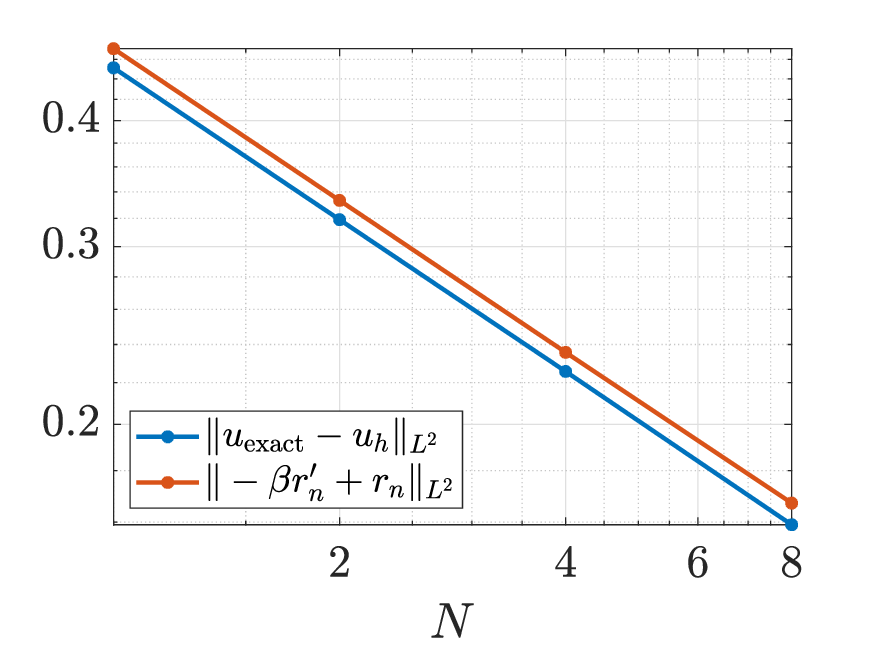}
        \caption{$M = 2 N $}
        \label{fig:sub2}
    \end{subfigure}
    \hfill
    \begin{subfigure}[b]{0.32\textwidth}
        \centering
        \includegraphics[width=\textwidth]{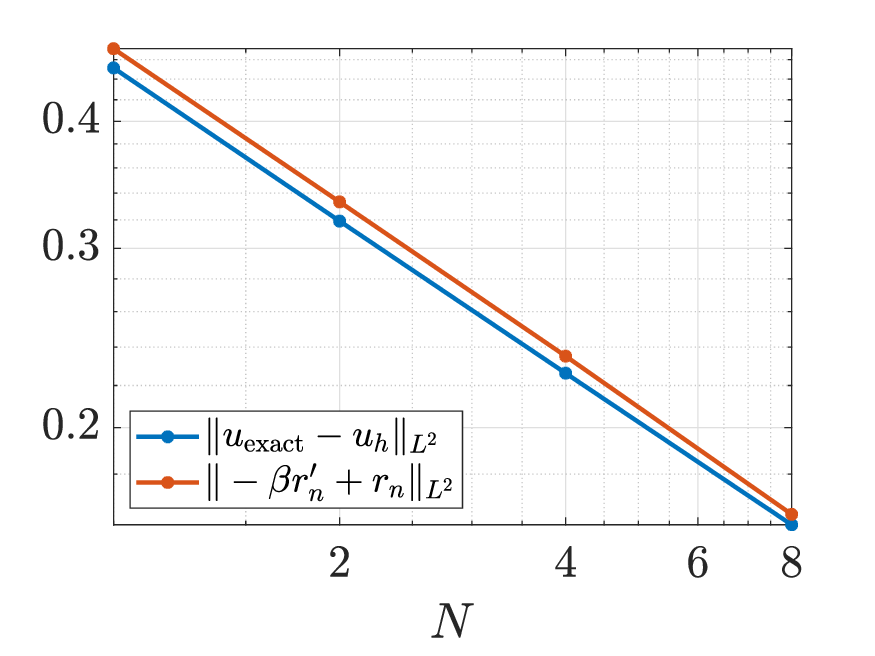}
        \caption{$M = 4 N $}
        \label{fig:sub3}
    \end{subfigure}

    \caption{Convergence plots of $u$ and $r$ for Case 3 under different choices of residual dimension. This figure illustrates $M=N$ (left), $M=2 N$ (middle), and $M=4 N$ (right).}
    \label{fig:three_valueNwithMs}
\end{figure}

\section{Conclusions}
\label{sec:conclusions}

In this work, we introduced a minimal-residual finite element framework in which neural networks are employed to approximate the test spaces required for the evaluation of residual dual norms. This leads to a deep residual Uzawa algorithm that alternates finite element updates with neural network training, thereby combining the stability of variational formulations with the flexibility of data-driven approximation.

Our analysis established well-posedness of the scheme, discrete inf-sup stability, and a priori as well as a posteriori error estimates. Numerical experiments in one and two dimensions confirmed that the proposed method achieves robust accuracy for advection-reaction problems with singular and discontinuous data, outperforming standard discretizations on coarse meshes.

The results demonstrate that minimal-residual methods enhanced with neural networks can provide a stable and flexible strategy for challenging PDE regimes. 
For future research, several directions for development remain.
From a theoretical perspective, the framework can be extended to nonlinear and higher-order PDEs. Furthermore, progress on constructing or approximating Fortin operators would reinforce its mathematical foundation. Methodologically, the approach can be expanded by parameterizing not only the residual but also the trial solution with neural networks. This would move the framework closer to fully data-driven variational solvers.

Further improvements may come from optimizing the relaxation strategy in the Uzawa iteration. Another promising direction is to explore alternative solvers, such as multigrid, Krylov subspace, or preconditioned descent methods, to accelerate convergence. Together, these directions open the path to more powerful and scalable neural minimal-residual formulations for complex multi-physics problems.

\section{Acknowledgments}
KvdZ is grateful to Luis Espath and Ignacio Muga for discussions on the topic.
The work done by HA was supported financially by the Islamic University of Madinah and the Ministry of Education of Saudi Arabia. EBC is supported by the European Union’s Horizon Europe research and innovation programme under the Marie Sklodowska-Curie grant agreement No 101119556.
The research of IB and KvdZ was supported by the Engineering and Physical Sciences Research Council (EPSRC), UK, under Grant EP/W010011/1.
\clearpage
\newpage

\printbibliography

\end{document}